\newcommand{\Ap}{\text{ Ap}}
\newcommand{\G}{\Gamma}
\newtheorem{theorem}{Theorem}[section]
\newtheorem{corollary}[theorem]{Corollary}
\newtheorem*{corollary*}{Corollary}
\newtheorem{proposition}[theorem]{Proposition}
\newtheorem{example}[theorem]{Example}
\newtheorem{remark}{Remark}
\newtheorem{definition}[theorem]{Definition}
\newtheorem{question}{Question}
\newcommand{\bs}{\backslash}
\def\N{\mathbb{N}}
\def\Z{\mathbb{Z}}
\def\a{{\bf a}}
\newcommand{\la}{\langle}
\newcommand{\ra}{\rangle}
\title[Principal Matrices of Certain Numerical Semigroups]
{A class of Numerical Semigroups defined by Kunz and Waldi - their Principal Matrices and structure}
\author[S.~Singh]{Srishti Singh}
\author[H.~Srinivasan]{Hema Srinivasan}
\address{Department of Mathematics, 
         University of Missouri, 
         Columbia MO 65211, USA.}
\date{\today}
\begin{document}

\begin{abstract}

In this paper, we explore a class of numerical semigroups initiated by Kunz and Waldi containing two coprime numbers $p<q$, which we call KW semigroups. We characterize KW numerical semigroups by their principal matrices. We present a necessary and sufficient criterion for a matrix to be the principal matrix of a KW semigroup.  An explicit description of the minimal resolutions of numerical semigroups in the same class with small embedding dimensions $3$ and $4$ is given. We give a generalization of this notion to three dimensions using lattice paths under a plane and present some preliminary results and questions. 
\end{abstract}

\thanks{MSC Primary: {13c05}, {13c70}; Secondary: {13D02}.}

\thanks{
\textbf{Keywords:} {Semigroup rings}, {monoids}, {critical binomials}.}

\maketitle

\section{Introduction}

A numerical semigroup $\la \a \ra$ is a submonoid of $\N$  minimally generated by  $\a = \{a_1, \ldots a_n\} \subseteq \N$ where  $\gcd (a_1, \ldots, a_n)= 1$. Such a semigroup will contain all but finitely many positive integers, called the gaps. The largest gap is called the Frobenius number, $F(\a)$.

This paper is inspired by recent remarkable works of Kunz and Waldi in \cite{KUNZ2017397} where they build numerical semigroups of the same multiplicity $p$ by filling in gaps. Any semigroup of embedding dimension $2$ is symmetric, i.e., the number of gaps is exactly half the number of elements less than the Frobenius number.  Thus, a semigroup generated by two relatively prime positive integers $p<q$, is symmetric and of multiplicity $p$.   Kunz and Waldi build extensions of these semigroups by filling in gaps larger than $p$  to create semigroups of higher embedding dimensions with the same multiplicity $p$.  All the gaps of $\la p,q \ra$ are of the form $pq-xp-yq$ for some positive integers $x, y \in \N$, with $pq-p-q$ being the largest, the Frobenius number.  

In \cite{KUNZ2017397}, Kunz and Waldi study numerical semigroups of the form $\la p,q, h_1, \ldots h_{n-2}\ra$ where $p<q$ are relatively prime positive integers.    Further, they analyze a special class of semigroups with embedding dimension $n$, which we denote by $KW(p,q)$, by filling in gaps where $x\le p/2, y\le q/2$.   In particular, they prove a structure theorem for this class: 
\begin{theorem}\label{KWmain} (Corollary $3.1$, Theorem (Appendix) \cite{KUNZ2017397})
    Let $A = \la p, q, h_1, \ldots h_{n-2}\ra$ with 
$h_i = pq-x_ip-y_iq,$ for some $0< 2x_i \le q, 0< 2y_i \le p$, $1 \leq i \leq n-2$.  Then the semigroup ring $k[A] $ is of type $n-1$ and its relation ideal is generated by $\binom{n}{2}$ elements. 
\end{theorem}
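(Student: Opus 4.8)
The plan is to present an explicit binomial generating set for the defining ideal $I_A$ of $k[A]=S/I_A$, where $S=k[Y_0,Y_1,Z_1,\dots,Z_{n-2}]$ surjects onto $k[A]$ by $Y_0\mapsto t^p$, $Y_1\mapsto t^q$, $Z_i\mapsto t^{h_i}$, and to read both invariants off the resolution it supports. First I would record the constraint forced by minimality of the generators: since $h_i-h_j=(x_j-x_i)p+(y_j-y_i)q$, if $x_i\le x_j$ and $y_i\le y_j$ held simultaneously then $h_i\in\la p,q,h_j\ra$. Hence, after reindexing, $0<x_1<\cdots<x_{n-2}\le q/2$ and $p/2\ge y_1>\cdots>y_{n-2}>0$, so the points $(x_i,y_i)$ form an antichain. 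It is convenient to adjoin two virtual generators by setting $Z_0:=Y_1$, $Z_{n-1}:=Y_0$, with coordinates $(x_0,y_0)=(0,p-1)$ and $(x_{n-1},y_{n-1})=(q-1,0)$, so that $g_k:=pq-x_kp-y_kq$ equals $q,h_1,\dots,h_{n-2},p$ for $k=0,\dots,n-1$, with $x$ increasing and $y$ decreasing across the whole list.

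With this bookkeeping the relations unify into one family: for $0\le i<j\le n-1$ put $f_{ij}:=Z_iY_1^{\,y_i-y_j}-Z_jY_0^{\,x_j-x_i}$. A direct check shows $f_{ij}\in I_A$, both monomials having image $t^{\,pq-x_ip-y_jq}$; the familiar special cases $Y_0^q-Y_1^p$ ($i=0,j=n-1$), the critical binomials $Y_1^{p-y_j}-Z_jY_0^{x_j}$ and $Z_iY_1^{y_i}-Y_0^{q-x_i}$ ($i=0$, resp.\ $j=n-1$), and the mixed relations between two filled generators all occur among the $f_{ij}$, of which there are exactly $\binom{n}{2}$. I would next check that the semigroup degrees $w_{ij}:=pq-x_ip-y_jq$ are pairwise distinct: an equality $w_{ij}=w_{i'j'}$ forces $q\mid(x_i-x_{i'})$ with all $x$'s in $\{0,\dots,q-1\}$, hence $i=i'$ and then $j=j'$. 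Distinctness of degrees is what separates the two Betti computations.

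To prove generation and minimality I would pass to the multigraded Betti numbers through the complexes $\Delta_w=\{\,F\subseteq\{0,\dots,n-1\}: w-\sum_{k\in F}g_k\in A\,\}$, using $\beta_{i,w}=\dim_k\tilde H_{i-1}(\Delta_w;k)$. The engine is an explicit membership test for $A$, equivalently a description of $\Ap(A,p)$, which I would extract from the rewriting rules $Z_iY_1^{\,y_i-y_j}\to Z_jY_0^{\,x_j-x_i}$: these strictly lower the $Y_1$-exponent, so each residue class mod $p$ has a unique reduced representative, and one verifies that sums and multiples of the $h_i$ never undercut the representatives coming from single $h_i$'s --- this is exactly where $2x_i\le q$ and $2y_i\le p$ enter. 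Granting this, I would show $\Delta_w$ is disconnected precisely for the $\binom{n}{2}$ degrees $w=w_{ij}$, each time into two components, giving $\beta_1=\binom{n}{2}$; together with the generation bound $\beta_1\le\binom{n}{2}$ this proves the $f_{ij}$ are a minimal generating set. For the type, $\beta_{n-1}=\sum_w\dim_k\tilde H_{n-2}(\Delta_w)$, I would show $\tilde H_{n-2}(\Delta_w)\ne0$ forces $\Delta_w=\partial\Delta^{n-1}$ (the only subcomplex of $S^{n-2}$ with top homology), and that this happens for exactly $n-1$ values of $w$; dually, that $\Ap(A,p)$ has exactly $n-1$ maximal elements under $\le_A$, i.e.\ $n-1$ pseudo-Frobenius numbers, so the Cohen--Macaulay type is $n-1$.

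The main obstacle is the type: pinning down the maximal elements of $\Ap(A,p)$ --- equivalently the $w$ with $\Delta_w\simeq S^{n-2}$ --- for an arbitrary admissible antichain is where the real combinatorics lies, since the count must stay at $n-1$ however the filled gaps are distributed under $2x_i\le q$, $2y_i\le p$. As a consistency check I note that the two extremal Betti numbers, $\binom{n}{2}$ and $n-1$, coincide with those of the Eagon--Northcott complex of a $2\times n$ matrix (whose Betti sequence is $\beta_i=i\binom{n}{i+1}$), and indeed for $n=3$ this is Herzog's determinantal presentation; whether the full resolution agrees with Eagon--Northcott for larger $n$ is not needed here and is precisely what the explicit low-dimensional computations address. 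Finally, the inductive observation that adjoining one admissible generator raises the embedding dimension, the type, and the number of relations by $1$, $1$, and $n-1$ respectively (note $\binom{n}{2}-\binom{n-1}{2}=n-1$) gives both a sanity check and a possible scaffold for an induction on $n$.
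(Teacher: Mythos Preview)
Your proposed generating set is wrong, and this is a concrete error rather than a gap in exposition. Take $n=3$. Your three binomials are
\[
f_{01}=Y_1^{\,p-y_1}-Z_1Y_0^{\,x_1},\qquad
f_{12}=Z_1Y_1^{\,y_1}-Y_0^{\,q-x_1},\qquad
f_{02}=Y_1^{\,p}-Y_0^{\,q},
\]
but $f_{02}=Y_1^{\,y_1}f_{01}+Y_0^{\,x_1}f_{12}$, so your list generates only the complete intersection $(f_{01},f_{12})$. The missing third generator is the \emph{quadratic} relation $Z_1^{\,2}-Y_0^{\,q-2x_1}Y_1^{\,p-2y_1}$, which does not lie in $(f_{01},f_{12})$ (one checks that $Z_1^2$ is not hit modulo $(f_{01},f_{12})$; alternatively, $I_A$ is not a complete intersection since its type is $2$). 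The phenomenon persists for all $n$: every $f_{ij}$ with $j>i+1$ is a telescoping combination of the consecutive $f_{k,k+1}$, while the relations $Z_iZ_j-Y_0^{\,q-x_i-x_j}Y_1^{\,p-y_i-y_j}$ for $1\le i\le j\le n-2$ are absent from your list entirely. That the two lists happen to have the same cardinality $\binom{n}{2}$ is an accident.

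The paper does not prove this theorem; it is quoted from Kunz--Waldi, and the actual minimal generators (reproduced in Section~4.1) are
\[
Z_iZ_j-Y_0^{\,q-x_i-x_j}Y_1^{\,p-y_i-y_j}\ (1\le i\le j\le n-2),\quad
Z_iY_1^{\,y_i-y_{i+1}}-Z_{i+1}Y_0^{\,x_{i+1}-x_i}\ (1\le i\le n-3),
\]
together with $Y_1^{\,p-y_1}-Y_0^{\,x_1}Z_1$ and $Z_{n-2}Y_1^{\,y_{n-2}}-Y_0^{\,q-x_{n-2}}$. Your downstream claims inherit the error: the degrees $w$ at which $\Delta_w$ is disconnected are not your $w_{ij}=pq-x_ip-y_jq$ but include the degrees $h_i+h_j$ of the quadratic relations; and your ``distinct degrees'' observation, while true for your $w_{ij}$, is irrelevant to the actual first syzygies. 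The squarefree-divisor-complex approach to $\beta_1$ and the type can be made to work, and the Eagon--Northcott heuristic is on target, but you must start from the correct relation degrees.
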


In this paper,  we begin by studying the principal matrices of these semigroups. A ``principal matrix" is an $n\times n$ integer matrix associated with a numerical semigroup of embedding dimension $n$.  If $\a= \{ a_1, \ldots, a_n\}$ minimally generates a numerical semigroup, then its principal matrix  $P(\a)= ( a_{ij})$ is an $n\times n$ matrix,  $ a_{ij} \ge 0, i\neq j$, where $-a_{ii}$ is the smallest positive integer such that $a_{ii}x_i+\sum_{i\neq j} a_{ij}x_j=0$. The diagonal entries of $P(\a)$ are  uniquely determined for a given $\a$ but the matrix itself need not be unique. 

Indeed for a general numerical semigroup $\la \a \ra$, the rank of $P(\a)\le n-1$ and when it is $n-1$, one can recover $\a$ from the $n-1$ order minors of any of the $n-1$ rows. Further, the rank of $P(\a) \ge n/2$, (\cite{MR4428293}).  As such, this matrix holds vital information about the semigroup itself. 

For instance, a criterion is established for Gorenstein monomial curves using principal matrices in \cite{Br1979} and \cite{Gimenez_2014}. Additionally, \cite{MR4428293} presents a characterization for principal matrices of numerical semigroups that are gluing of numerical semigroups.  
 
In this paper,  we study the principal matrices of the semigroups in $KW(p,q)$. We establish a necessary and sufficient criterion for a matrix to be the principal matrix of a numerical semigroup within Kunz and Waldi's class in Theorem \ref{main}. As a consequence, we get 

\begin{corollary}\label{maincor}
Let $3\le p<q$ be prime numbers.  Then the principal matrix of  a numerical semigroup $A =\la p , q, h_1, \ldots, h_{n-2} \ra \in KW((p,q))$  is of the form 
$$P(A)=  T(\alpha, \beta)=
    \begin{bmatrix}
    - \left(\frac{q + \alpha_{n-2}}{2} \right) & \frac{p-\beta_{n-2}}{2} & 0 & 0 & \cdots & 0 & 1 \\
    \frac{q- \alpha_1}{2} & - \left(\frac{p+\beta_1}{2} \right) & 1 & 0 & \cdots & 0 & 0 \\
    \alpha_1 & \beta_1 & -2 & 0 & \cdots & 0 & 0 \\
    \alpha_2 & \beta_2 & 0 & -2 & \cdots & 0 & 0 \\
    \vdots & \vdots & \vdots & \vdots & \ddots & \vdots & \vdots \\
    \alpha_{n-2} & \beta_{n-2} & 0 & 0 & \cdots & 0 & -2 
    \end{bmatrix},$$ for some odd positive integers $\alpha_i, \beta_i$ satisfying $q> \alpha_1 \ge \ldots \ge \alpha_{n-2}\ge 0$ and $0< \beta _1\le \cdots \leq \beta_{n-2}<p$.  Conversely, any $n\times n$ matrix of the form $P(A)= T(\alpha, \beta)$ above is a principal matrix of a $KW(p,q)$ numerical semigroup. 
\end{corollary}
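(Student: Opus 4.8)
The plan is to verify directly that $T(\alpha,\beta)$ satisfies the defining properties of a principal matrix of $A$, and then run the construction backwards for the converse. First I would set up the dictionary between the Kunz--Waldi data and the entries $\alpha_i,\beta_i$. Reindex the generators as $a_1=p$, $a_2=q$, $a_{i+2}=h_i$, and since $p,q$ are odd primes and $h_i=pq-x_ip-y_iq$, put $\alpha_i:=q-2x_i$ and $\beta_i:=p-2y_i$. The constraints $0<2x_i\le q$, $0<2y_i\le p$ become $0\le\alpha_i<q$ and $0<\beta_i\le p$, and $\alpha_i,\beta_i$ are automatically odd because $p,q$ are odd. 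A short computation gives $2h_i=\alpha_ip+\beta_iq$, so each of the last $n-2$ rows $(\alpha_i,\beta_i,0,\dots,-2,\dots,0)$ is a genuine relation with non-negative off-diagonal entries. I would also record the monotonicity: because the $h_i$ form a minimal generating set, no two of the pairs $(x_i,y_i)$ are comparable (else some $h_i\in\langle p,q,\text{others}\rangle$), so ordering by increasing $x_i$ forces decreasing $y_i$, giving $q>\alpha_1\ge\cdots\ge\alpha_{n-2}$ and $\beta_1\le\cdots\le\beta_{n-2}<p$. For the first two rows I would use the telescoping identities $h_1=\tfrac{p+\beta_1}{2}q-\tfrac{q-\alpha_1}{2}p$ and $h_{n-2}=\tfrac{q+\alpha_{n-2}}{2}p-\tfrac{p-\beta_{n-2}}{2}q$, which exhibit rows $2$ and $1$ with all off-diagonal entries non-negative.

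Next I would pin down the diagonal entries, which are the only forced part of a principal matrix. For $i\ge 3$ the claim $-a_{i+2,i+2}=2$ is immediate: $h_i$ is a minimal generator, so $h_i\notin\langle a_j:j\ne i+2\rangle$, while $2h_i=\alpha_ip+\beta_iq\in\langle p,q\rangle$, so $2$ is the least positive multiplier. The heart of the argument is that $-a_{11}=\tfrac{q+\alpha_{n-2}}{2}=q-x_{n-2}$ and $-a_{22}=\tfrac{p+\beta_1}{2}=p-y_1$ are the minimal multipliers for $p$ and $q$, and I expect this minimality to be the main obstacle. For $p$, one writes an arbitrary non-negative representation $cp=\lambda q+\sum_i\mu_i h_i$, clears the $h_i$ via $h_i+x_ip+y_iq=pq$, and reduces to $(c+X)p=(\lambda-Y+Mp)q$ with $M=\sum\mu_i$, $X=\sum\mu_ix_i$, $Y=\sum\mu_iy_i$. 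Coprimality forces $c+X=kq$ for some $k\ge 1$, whence $c=q(k-M/2)+\tfrac12\sum\mu_i\alpha_i$, and feasibility $\lambda\ge 0$ becomes $\sum\mu_i\beta_i\le(2k-M)p$. Here the Kunz--Waldi inequalities enter decisively: since each $\beta_i\ge 1$ this constraint forces $M\le 2k-1$, so $c\ge\tfrac{q}{2}+\tfrac12\sum\mu_i\alpha_i$, and since $\alpha_i\ge\alpha_{n-2}$ with $M\ge 1$ one has $\sum\mu_i\alpha_i\ge\alpha_{n-2}$, giving $c\ge\tfrac{q+\alpha_{n-2}}{2}$ with equality at $\mu_{n-2}=1$. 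The bound for $q$ follows by the symmetric computation with $p,q$ and $\alpha,\beta$ interchanged, the extremal generator now being $h_1$ (largest $y_1$, smallest $\beta_1$).

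For the converse I would reverse the construction. Given any $T(\alpha,\beta)$ with odd $\alpha_i,\beta_i$ obeying the stated inequalities, set $x_i:=\tfrac{q-\alpha_i}{2}$ and $y_i:=\tfrac{p-\beta_i}{2}$; these are integers because $q-\alpha_i$ and $p-\beta_i$ are even, and they satisfy $0<2x_i\le q$, $0<2y_i\le p$, so $A=\la p,q,h_1,\dots,h_{n-2}\ra$ with $h_i=pq-x_ip-y_iq$ is a $KW(p,q)$ semigroup. The monotonicity of $\alpha,\beta$ makes the pairs $(x_i,y_i)$ an antichain, so the $h_i$ are genuinely minimal generators, and the forward computation then shows $T(\alpha,\beta)$ is a principal matrix of $A$.

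Finally I would observe that this is precisely the specialization of the general criterion (Theorem \ref{main}) to the prime case: primality is exactly what makes $q-\alpha_i$ and $p-\beta_i$ even, so the multiplier $2$ and the halves $\tfrac{q\pm\alpha}{2},\tfrac{p\pm\beta}{2}$ are integers and the oddness of $\alpha_i,\beta_i$ is forced. The substantive minimality step is inherited from the proof of Theorem \ref{main}; the corollary's content is the clean closed form that primality produces.
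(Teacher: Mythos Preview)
Your argument is correct and follows exactly the route the paper takes: the minimality computation you sketch for $-a_{11}$ and $-a_{22}$ is the same divisibility-and-parity argument that proves Theorem~\ref{main}, and your final paragraph correctly identifies the corollary as the specialization of that theorem under condition~(i). The only difference is organizational: the paper's proof of the corollary is a two-line citation of Theorem~\ref{main} (primality of $p\ge 3$ forces $p$ odd, so case~(i) applies), whereas you unpack the minimality step in full. One small slip: from $0<2y_i\le p$ with $p$ odd you get $\beta_i=p-2y_i\le p-2<p$, not $\beta_i\le p$; likewise $\alpha_i\ge 1$, not merely $\ge 0$. This does not affect the argument.
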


As an interesting fact, we observe that the defining ideals of semigruop rings $k[H]$ for $H \in KW(p,q)$ can be written as sums of $2\times 2$ minors of $2+ \binom{n-2}{2}$ matrices of size $2\times 3$.  

The paper is organized as follows: In section $2$, we define the notion of principal matrices, the class $KW(p,q)$ of numerical semigroups and some notation. The proof of our main result (Theorem $\ref{main}$) is in section $3$. In section $4$, various auxiliary results pertaining to the minimal resolutions and relation ideal of semigroup rings of $KW(p,q)$ are given. We end this paper by extending the class of KW-semigroups in section $5$, and conclude with some open questions. 
 
\section{Preliminaries}

If a set of relatively prime positive integers $\mathbf{a} = \{a_1,...,a_n\}$   minimally generates a semigroup $\la \mathbf{a} \ra$, then, there are equations $$c_i a_i = \sum_{j\neq i, 1\le j\le n}a_{ij} a_i, \hskip .1truein 1\leq i \leq n.$$ In the associated semigroup ring    $k[\mathbf{a}] = k[t^{a_1}, \ldots, t^{a_n}] =k[x_1, \ldots, x_n]/I_{\mathbf{a}}$, $$f_i = x^{c_i}-\prod_{j\neq i, 1\le j\le n} x_j^{a_{ij}}$$ are among a set of minimal generators of $I_{\mathbf{a}}$.  Further, $\sqrt{(f_1, \ldots, f_n)} = \sqrt{ I_{\mathbf {a}}}$.  These $f_i$ are therefore called critical binomials of the numerical semigroup  $\la \mathbf{a} \ra$ or the binomial toric ideal $I_{\mathbf{a}}$.  This is also contained in the following $n\times n$ matrix, which is called the Principal Matrix of the numerical semigroup. 
\begin{definition}(Definition 1, \cite{MR4428293}) 
    Let $\mathbf{a} = \{a_1,...,a_n\}$ be a set of positive integers minimally generating a semigroup $\la \mathbf{a} \ra$. $A= 
\begin{bmatrix}
    -c_1 & a_{12} & \cdots & a_{1n} \\
    a_{21} & -c_2 & \cdots & a_{2n} \\
    \vdots & \vdots & \ddots & a_{2n} \\
    a_{n1} & a_{n2} & \cdots & -c_n
\end{bmatrix}$ is called a \textit{principal matrix} of $\la \mathbf{a} \ra$ if $A \mathbf{a}=0$, and $c_i$ is the smallest positive integer such that $c_ia_i \in \la \mathbf{a} - \{a_i\} \ra$ for all $1 \leq i \leq n$. 
\end{definition}

Although the diagonal entries $-c_{i}$ are uniquely determined, $a_{ij}$ are not always unique. The sequence of positive integers $\mathbf{a}$ can be recovered from a given principal matrix $A$ of rank $n-1$ by factoring out the greatest common divisor of the entries of any nonzero column of the adjoint of A and taking its absolute value.

\begin{example}
    The principal matrix of the numerical semigroup $\a= \la 5, 7, 11, 13\ra$ is $$P(\a) = \begin{pmatrix} -4 &1& 0&1\\
2&-3&1&0\\
3&1&-2&0\\
1&3&0&-2\\
\end{pmatrix}$$

We can recover $\a$ simply from the $3\times 3$ minors of the first three rows.  
\end{example}
In this paper, we study the principal matrices of numerical semigroups in $KW(p,q)$, constructed in \cite{article} as follows:

\begin{definition}
Let $p,q \in \N$ be relatively prime with $3 \leq p < q$.  The set of Kunz-Waldi Semigroups associated to $p<q$, denoted by   $KW(p,q)$, is  the set of all numerical semigroups $H$ with $\la p,q \ra \subset H \subset \la p,q,r \ra$, where 
$r=
\begin{cases}
\frac{p}{2}, & p \text{ even} \\
\frac{q}{2}, & q \text{ even} \\
\frac{p+q}{2}, & p \text{ and } q \text{ odd}. 
\end{cases}$\\
\end{definition}
 All $H \in KW(p,q)$ are in one-to-one correspondence to the lattice paths in the rectangle $R \subseteq \mathbb R^2$ with the corners $(0,0), (0,p'-1),(q'-1,p'-1),$ and $(q'-1,0)$, where $p'=\left\lfloor\frac{p}{2}\right\rfloor$ and $q'=\left\lfloor\frac{q}{2}\right\rfloor$. The only $H \in KW(p,q)$ with embedding dimension $e(H)=2$ are $\la p, q \ra, \la p/2,q \ra$, and $\la p, q/2 \ra$. For any other $H \in KW(p,q)$, the minimal generating set is $\{p,q,h_1,...,h_{e(H)-2}\}$, where $h_i = pq-x_ip-y_iq$, if $(x_i,y_i)$ for $1 \leq i \leq e(H)-2$ are the corners of the lattice path defining $H$. This gives a sufficient condition for $H$ to be in $KW(p,q)$ and we incorporate it in the definition: 

$H \in KW(p,q)$ if 
\begin{equation}\label{KWcondition}
    2x_i \leq q \quad \text{ and } \quad 2y_i \leq p \quad \quad \forall \; \; 1 \leq i \leq e(H)-2
\end{equation}

\begin{remark}
    Thus, $H = \la p,q,h_1, \ldots, h_{n-2} \ra \in KW(p,q)$ if and only if $h_i = pq-x_ip-y_iq$ satisfying
     $$2x_i \leq q \quad \text{ and } \quad 2y_i \leq p \quad \quad \forall \; \; 1 \leq i \leq e(H)-2,$$  $0<x_1<\ldots<x_{n-2}$, and $ y_1> \ldots >y_{n-2}>0$.
     \end{remark}

\section{Principal Matrix of KW semigroups}

In this section, we prove one of our main theorems \eqref{main}, which provides an explicit structure of the principal matrix for any $H \in KW(p,q)$.

\begin{theorem}\label{main} 
 Let $p<q$ be two relatively prime positive integers.  Choose positive integers $q > \alpha_1 > \alpha_2 > ... > \alpha_{n-2} \geq0$, of the same parity and $0 \leq \beta_1 < \beta_2 < ... < \beta_{n-2} < p$ of the same parity.   Let  $h_i = \frac{p \alpha_i + q \beta_i}{2}$, $1 \leq i \leq n-2$.

Then the matrix 
\begin{equation} \label{principalmatrix}
 T:=T(\alpha, \beta)=
    \begin{bmatrix}
    - \left(\frac{q + \alpha_{n-2}}{2} \right) & \frac{p-\beta_{n-2}}{2} & 0 & 0 & \cdots  & 1 \\
    \frac{q- \alpha_1}{2} & - \left(\frac{p+\beta_1}{2} \right) & 1 & 0 & \cdots &  0 \\
    \alpha_1 & \beta_1 & -2 & 0 & \cdots & 0 \\
    \alpha_2 & \beta_2 & 0 & -2 & \cdots & 0 \\
    \vdots & \vdots & \vdots & \vdots & \ddots & \vdots \\
    \alpha_{n-2} & \beta_{n-2} & 0 & 0 & \cdots  & -2 
    \end{bmatrix}
\end{equation}

    is a principal matrix for the semigroup $H= \{ p, q, h_1,...,h_{n-2} \}$ if and only if $H \in KW(p,q)$ provided  any of the following is true:
    \begin{enumerate}
        \item[(i)] $p$ and $q$ are both odd
        \item[(ii)] $p$ is even and $2h_1 \neq p\alpha_1$
        \item[(iii)] $p$ is even, $2h_1 = p\alpha_1$ but $q\le 2\alpha_1-\alpha_{n-2}$
        \item[(iv)] $q$ is even and $2h_{n-2} \neq q\beta_{n-2}$
        \item[(v)] $q$ is even and $p\le 2\beta_{n-2} - \beta_1$.
    \end{enumerate}

    If none of the above conditions is true and 
    if $p$ is even, then the principal matrix of $H$ is 
     $$
        \begin{bmatrix}
    - \alpha_1 & 0 & 2 & 0 & \cdots & 0  \\
    \frac{q- \alpha_1}{2} & - \left(\frac{p+\beta_1}{2} \right) & 1 & 0 & \cdots & 0  \\
    \alpha_1 & \beta_1 & -2 & 0 & \cdots &  0 \\
    \alpha_2 & \beta_2 & 0 & -2 & \cdots  & 0 \\
    \vdots & \vdots & \vdots & \vdots & \ddots & \vdots \\
    \alpha_{n-2} & \beta_{n-2} & 0 & 0 & \cdots & -2 
    \end{bmatrix}$$  and 
    if $q$ is even, it is $$  \begin{bmatrix}
   - \left(\frac{q + \alpha_{n-2}}{2} \right) & \frac{p-\beta_{n-2}}{2} & 0 & 0 & \cdots  & 1 \\
   0 & - \beta_{n-2} & 0 & 0 & \cdots & 2  \\
    \alpha_1 & \beta_1 & -2 & 0 & \cdots &  0 \\
    \alpha_2 & \beta_2 & 0 & -2 & \cdots  & 0 \\
    \vdots & \vdots & \vdots & \vdots & \ddots & \vdots \\
    \alpha_{n-2} & \beta_{n-2} & 0 & 0 & \cdots & -2 
    \end{bmatrix}.$$ 
\newcommand{\an}{\alpha_{n-2}}
\newcommand{\bn}{\beta_{n-2}}
\end{theorem}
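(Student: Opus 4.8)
The plan is to verify the two defining properties of a principal matrix directly: that $T(\alpha,\beta)\,\mathbf{a}=0$ for $\mathbf{a}=(p,q,h_1,\dots,h_{n-2})^{\mathsf{T}}$, and that each diagonal entry $-c_i$ is \emph{minimal}, i.e.\ $c_i$ is the smallest positive integer with $c_i a_i \in \langle \mathbf{a}-\{a_i\}\rangle$. The kernel condition is a routine computation once we substitute $h_i=\frac{p\alpha_i+q\beta_i}{2}$: the rows indexed by $3,\dots,n$ give $\alpha_i p+\beta_i q-2h_i=0$, which is immediate, while the first two rows encode the relations linking $p$, $q$, and the extremal generators $h_1,h_{n-2}$ through the corner inequalities $2x_i\le q$, $2y_i\le p$. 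So the substantive content, and the main obstacle, is establishing minimality of the $c_i$ and then showing this forces $H\in KW(p,q)$ in the equivalence.

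First I would dispose of the ``easy'' diagonal entries $c_3,\dots,c_n$. Each is claimed to be $2$, so I must show $2h_i\in\langle p,q,\{h_j\}_{j\ne i}\rangle$ (which is visible from $2h_i=\alpha_i p+\beta_i q$ with $\alpha_i,\beta_i\ge 0$) and that $h_i\notin\langle p,q,\{h_j\}_{j\ne i}\rangle$, so that $c_i\ne 1$. The latter uses that $h_i$ is a gap of $\langle p,q\rangle$ of the specific form $pq-x_ip-y_iq$ with $x_i\le q/2$, $y_i\le p/2$, together with the strict monotonicity $x_1<\cdots<x_{n-2}$, $y_1>\cdots>y_{n-2}$ from the Remark, which prevents any $h_i$ from being built out of the others. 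For $c_1,c_2$ I would interpret the top-left $2\times 2$ block: the relation $\frac{q+\alpha_{n-2}}{2}\cdot p=\frac{p-\beta_{n-2}}{2}\cdot q+h_{n-2}$ and its companion must be checked to be the \emph{shortest} multiples of $p$ and $q$ that land in the subsemigroup generated by the rest, which is where the conditions (i)--(v) enter.

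The heart of the argument is the case analysis (i)--(v) versus the exceptional cases, and I expect this to be the hardest part. The issue is that minimality of $c_1$ (the multiplicity direction for $p$) and $c_2$ (for $q$) can fail when an extremal $h_i$ happens to coincide with a pure multiple: when $p$ is even and $2h_1=p\alpha_1$, the generator $h_1$ is already a multiple of $p/2$, so a shorter relation $c_1=\alpha_1$ with the alternate representation $2h_1=\alpha_1 p$ competes with the generic one, lowering $c_1$ unless the auxiliary inequality $q\le 2\alpha_1-\alpha_{n-2}$ rules it out; symmetrically for $q$ even and $2h_{n-2}=q\beta_{n-2}$. So the plan is to compute both candidate values of $c_1$ (and of $c_2$) explicitly, compare them under each hypothesis, and show that in cases (i)--(v) the matrix entry $T$ gives the genuine minimum, whereas in the complementary degenerate case the smaller value is realized and the corrected first (resp.\ second) row displayed at the end is the true principal row. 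Throughout, the reverse implication---that the matrix being principal forces $2x_i\le q$ and $2y_i\le p$, hence $H\in KW(p,q)$---I would obtain by reading off the corner coordinates $x_i=\frac{q-\alpha_i+\dots}{\dots}$ from the entries and checking the defining inequalities of Equation \eqref{KWcondition} hold exactly when the parity and size constraints on $\alpha_i,\beta_i$ in the hypothesis are met.
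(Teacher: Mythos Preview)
Your overall strategy matches the paper's: verify $T\mathbf{a}=0$ row by row, check $c_i=2$ for $i\ge 3$ via $2h_i=\alpha_i p+\beta_i q$ and minimality of $h_i$, then argue minimality of $c_1,c_2$ with the parity case analysis, and finally read off $x_i=\frac{q-\alpha_i}{2}$, $y_i=\frac{p-\beta_i}{2}$ to get the KW inequalities \eqref{KWcondition}. So the architecture is right.

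The gap is in your plan for the minimality of $c_1$ and $c_2$. You propose to ``compute both candidate values of $c_1$ \dots\ and compare them,'' as if the only competition for the generic relation $\frac{q+\alpha_{n-2}}{2}\,p=\frac{p-\beta_{n-2}}{2}\,q+h_{n-2}$ is the single degenerate relation $\alpha_1 p=2h_1$ coming from $\beta_1=0$. But a priori there could be many other relations $\delta p=\gamma q+\sum_i\nu_i h_i$ with arbitrary $\nu_i\ge 0$, and you have not explained why none of them produces a smaller $\delta$. Comparing two specific candidates is not enough; you need a lower bound valid for \emph{every} such relation.

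The paper's device for this is the step you are missing: take an arbitrary relation $\delta p=\gamma q+\sum_i\nu_i h_i$ (some $\nu_j\ge 1$ since $\gcd(p,q)=1$), substitute $2h_i=\alpha_i p+\beta_i q$, and collect to get
\[
p\Bigl(2\delta-\sum_i\nu_i\alpha_i\Bigr)=q\Bigl(2\gamma+\sum_i\nu_i\beta_i\Bigr).
\]
Coprimality forces $q\mid 2\delta-\sum_i\nu_i\alpha_i$, so $2\delta-\sum_i\nu_i\alpha_i=qy$ for some integer $y\ge 0$. The crux is then to argue $y\ge 1$: this holds automatically when $p$ is odd (all $\beta_i>0$ forces the right side positive), and under (ii)--(iii) otherwise. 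Once $y\ge 1$ and some $\nu_j\ge 1$, you get
\[
\delta=\frac{qy+\sum_i\nu_i\alpha_i}{2}\ \ge\ \frac{q+\alpha_j}{2}\ \ge\ \frac{q+\alpha_{n-2}}{2},
\]
which is the genuine minimality. The symmetric computation handles $c_2$. Without this divisibility-and-lower-bound argument, your ``compare two candidates'' plan does not close.
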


\begin{proof}
First, suppose that one of the conditions $(i)$-$(v)$ are satisfied. 
Set $\mathbf h = \{p,q,h_1,...,h_{n-2}\}$. By construction,  $h_i$ is a minimal generator and $2h_i = p\alpha_i+ q\beta_i$. 
Next, we note that  
\begin{equation} \label{c11}
p\left( \frac{q+\alpha_{n-2}}{2} \right) =\frac{pq}{2} + h_{n-2} - \frac{q \beta_{n-2}}{2} = q \left( \frac{p-\beta_{n-2}}{2}\right) + h_{n-2} \in \la \mathbf h \bs \{q\}\ra
\end{equation}
since $p$ and $\beta_n$ have the same parity, and 
\begin{equation} \label{c22}
    q\left( \frac{p+\beta_1}{2} \right) =\frac{pq}{2} + h_{1} - \frac{p\alpha_1}{2} = p \left( \frac{q-\alpha_1}{2}\right) + h_{1} \in \la\mathbf h \bs \{p\}\ra,
\end{equation}
since $q$ and $\alpha_1$ are of same parity.

It remains to show the that $\frac{q+\alpha_{n-2}}{2}$ and $\frac{p+\beta_1}{2}$ are the smallest integers that make (\ref{c11}) and (\ref{c22}) true, respectively.

Suppose $\delta q = \gamma p + \mu_1 h_1 + ... + \mu_{n-2} h_{n-2}$, for some $\delta, \gamma, \mu_i (1 \leq i \leq n-2) \in \N$.  Not all $\mu_i$ are zero as $p$ and $q$ are coprime. Then
\begin{equation*}
 q \left( \delta - \frac{\mu_1 \beta_1 + ... + \mu_{n-2} \beta_{n-2}}{2} \right) = p \left( \gamma + \frac{\mu_1 \alpha_1 + ... + \mu_{n-2} \alpha_{n-2}}{2}\right)   
\end{equation*}

From the above equation, it follows that $p$ divides $2\delta - (\mu_1 \beta_1 + ... + \mu_{n-2} \beta_{n-2}$), so  
\begin{equation} \label{pqx}
    0 \le px = 2\delta - (\mu_1 \beta_1 + ... + \mu_{n-2} \beta_{n-2}) 
\end{equation}
for some $x \in \N$.  Now, if $q$ is odd, then none of the $\alpha_i$ are zero and hence $x>0$. 

Observe that necessarily some $\mu_j \geq 1$, so in (\ref{pqx}), 
\begin{align*}
    \delta &= \frac{px}{2} + \frac{\sum\limits_{i\neq j, 1\le i\le n}\mu_i \beta_i}{2} + \frac{p+\beta_j}{2} - \frac{p+\beta_j}{2} \\
    &= \frac{p+\beta_j}{2} + \frac{p}{2} \left( x - 1 \right) + \frac{\beta_j}{2}(\mu_j-1) + \sum_{i \neq j} \mu_i \beta_i \\
    &\geq \frac{p+\beta_j}{2} \\
    &\geq \frac{p+\beta_1}{2}
\end{align*}

We have shown that $\frac{p+\beta_1}{2}$ is the smallest positive integer such that $\left(\frac{p+\beta_1}{2}\right) p \in \la \mathbf h \bs \{p\} \ra$.

Even if $q$ is even, $x>0$ unless $\alpha_{n-2}= 0$ and $2h_{n-2} = p\beta_{n-2}$.  In that case, $T$ will still be principal as long as $p\le 2\beta_{n-2}-\beta_1$.  

Similarly, suppose $$\delta' p = \gamma' q + \nu_1 h_1 + ... + \nu_{n-2} h_{n-2},$$ for some $\delta', \gamma', \nu_i (1 \leq i \leq n-2) \in \N$, necessarily some $\nu_j \geq 1$. Isolating $p$ and $q$ as above, we obtain that $$qy = 2\delta' - (\nu_1 \alpha_1 + ... + \nu_{n-2} \alpha_{n-2})$$ for some $y \geq 1$ if $p$ is not even so that none of the $\beta_i$ are zero.

Then \begin{align*}
   \delta' &= \frac{qy}{2} + \frac{\nu_1 \alpha_1 + ... + \nu_{n-2} \alpha_{n-2}}{2} + \frac{q+\alpha_j}{2} - \frac{q+\alpha_j}{2}\\
    &\geq \frac{q+\alpha_j}{2} \\
    &\geq \frac{q+\alpha_{n-2}}{2}
\end{align*}  

Again, if $p$ happens to be even, $T$ will still be principal as long as one of the conditions (i)-(v) is statisfied.
Thus, $T$ is a principal matrix of $H$. Moreover, each $h_i$, $1 \leq i \leq n-2$ can expressed as follows: 
\begin{equation} \label{hi}
   h_i = \frac{p\alpha_i + q \beta_i}{2} = pq-\left( \frac{q-\alpha_i}{2} \right) p - \left( \frac{p-\beta_i}{2}\right) q 
\end{equation}

By hypotheses, $\alpha_i > -1$ and $\beta_i > -1$, so that $ 2 \left(\frac{q-\alpha_i}{2} \right) = q - \alpha_i < q+1$, and $2 \left( \frac{p-\beta_i}{2} \right) < p+1 $. In view of the sufficient condition to be a KW-semigroup (\ref{KWcondition}), it now follows that $H \in KW(p,q)$.

Finally, say $p$ is even, and  $2h_1 = p\alpha_1$ with  $q > 2\alpha_1-\alpha_{n-2}$.  Then $q$ is odd.  And the minimal relation on $p$ is $\alpha_1 p = 2h_1$.  Hence the principal matrix is 
 $$
        \begin{bmatrix}
    - \alpha_1 & 0 & 2 & 0 & \cdots & 0  \\
    \frac{q- \alpha_1}{2} & - \left(\frac{p+\beta_1}{2} \right) & 1 & 0 & \cdots & 0  \\
    \alpha_1 & \beta_1 & -2 & 0 & \cdots &  0 \\
    \alpha_2 & \beta_2 & 0 & -2 & \cdots  & 0 \\
    \vdots & \vdots & \vdots & \vdots & \ddots & \vdots \\
    \alpha_{n-2} & \beta_{n-2} & 0 & 0 & \cdots & -2 
    \end{bmatrix}$$

    The case when $q$ is even can be proved similarly. 
\end{proof}

Consequently, we obtain a characterization of the principal matrix of any $H\in KW(p,q)$ when $3<p<q$ are prime.  This is Corollary \ref{maincor}. Recall from the Introduction:
\begin{corollary*}[\ref{maincor}]
    Let $p$ be odd and $q>p$ be relatively prime to $p$.   Then the principal matrix of  a numerical semigroup $A =( p , q, h_1, \ldots, h_{n-2})\in KW((p,q))$  is of the form 
$$P(A)=  T(\alpha, \beta)=
    \begin{bmatrix}
    - \left(\frac{q + \alpha_{n-2}}{2} \right) & \frac{p-\beta_{n-2}}{2} & 0 & 0 & \cdots & 0 & 1 \\
    \frac{q- \alpha_1}{2} & - \left(\frac{p+\beta_1}{2} \right) & 1 & 0 & \cdots & 0 & 0 \\
    \alpha_1 & \beta_1 & -2 & 0 & \cdots & 0 & 0 \\
    \alpha_2 & \beta_2 & 0 & -2 & \cdots & 0 & 0 \\
    \vdots & \vdots & \vdots & \vdots & \ddots & \vdots & \vdots \\
    \alpha_{n-2} & \beta_{n-2} & 0 & 0 & \cdots & 0 & -2 
    \end{bmatrix},$$ for some odd positive integers $\alpha_i, \beta_i$ satisfying $q> \alpha_1 \ge \ldots \ge \alpha_{n-2}\ge 0$ and $0< \beta _1\le \cdots \leq \beta_{n-2}<p$.  Conversely, any $n\times n$ matrix of the form $P(A)= T(\alpha, \beta)$ above is a principal matrix of a $KW(p,q)$ numerical semigroup. 
\end{corollary*}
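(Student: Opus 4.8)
The plan is to deduce Corollary~\ref{maincor} directly from Theorem~\ref{main}, specializing to the case in which both $p$ and $q$ are odd (for instance, when $3 \le p < q$ are odd primes). The decisive simplification is that when $p$ and $q$ are both odd, condition~(i) of Theorem~\ref{main} holds automatically, so neither of the two exceptional principal matrices listed at the end of that theorem can occur: the ``clean'' matrix $T(\alpha,\beta)$ is always the principal matrix. Consequently the whole corollary amounts to a dictionary between the lattice-path data $(x_i,y_i)$ describing a $KW(p,q)$ semigroup and the parameters $(\alpha_i,\beta_i)$ appearing in $T$.

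For the forward implication, I would begin with $A = \la p,q,h_1,\ldots,h_{n-2}\ra \in KW(p,q)$ and use the Remark to write $h_i = pq - x_i p - y_i q$ with $2x_i \le q$, $2y_i \le p$, and the strict chains $0 < x_1 < \cdots < x_{n-2}$, $y_1 > \cdots > y_{n-2} > 0$. Setting $\alpha_i := q - 2x_i$ and $\beta_i := p - 2y_i$ inverts \eqref{hi}, giving $h_i = \tfrac{p\alpha_i + q\beta_i}{2}$ exactly. I would then verify the hypotheses of Theorem~\ref{main}: oddness of $q$ makes each $\alpha_i$ odd and oddness of $p$ makes each $\beta_i$ odd; the bounds $1 \le x_i$ and $2x_i \le q-1$ (the latter because $q$ is odd) give $0 < \alpha_{n-2} < \cdots < \alpha_1 < q$, and symmetrically $0 < \beta_1 < \cdots < \beta_{n-2} < p$. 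Since condition~(i) is in force, Theorem~\ref{main} then delivers that $T(\alpha,\beta)$ is the principal matrix of $A$.

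The converse is immediate: given a matrix $T(\alpha,\beta)$ with odd positive $\alpha_i,\beta_i$ obeying the stated orderings, I would set $h_i := \tfrac{p\alpha_i+q\beta_i}{2}$ and apply Theorem~\ref{main} under condition~(i), which simultaneously certifies that $T(\alpha,\beta)$ is a principal matrix of $H = \la p,q,h_1,\ldots,h_{n-2}\ra$ and that $H \in KW(p,q)$.

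I do not anticipate a genuine obstacle, since the hard analytic work---showing that $\tfrac{q+\alpha_{n-2}}{2}$ and $\tfrac{p+\beta_1}{2}$ are the minimal multipliers---is already done in Theorem~\ref{main}. The only points demanding care are bookkeeping: confirming that the oddness of both $p$ and $q$ forces every $\alpha_i,\beta_i$ to be odd (so that the weak inequalities $q>\alpha_1\ge\cdots$ and $0<\beta_1\le\cdots$ of the corollary may be read as the strict chains required by the theorem, reflecting that distinct minimal generators correspond to distinct lattice-path corners), and checking that condition~(i) indeed excludes the two exceptional matrix forms.
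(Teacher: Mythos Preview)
Your proposal is correct and follows essentially the same route as the paper: both arguments reduce the corollary to Theorem~\ref{main} by observing that when $p$ (and $q$) are odd, condition~(i) holds and the exceptional matrices are excluded. Your write-up is in fact more explicit than the paper's---you spell out the substitution $\alpha_i=q-2x_i$, $\beta_i=p-2y_i$ and verify the parity and ordering constraints, whereas the paper simply notes that $p$ odd forces $y_1\neq p/2$ and then invokes Theorem~\ref{main} directly.
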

\begin{proof}
Note that $h_i = pq-px_i-qy_i$, for all $1 \leq i \leq n-2$, with $0<2x_i \leq q, 0 < 2y_i \leq p$. 
     Since $H \in KW(p,q)$, it satisfies the hypotheses of Theorem $\ref{main}$. Next, since $p\geq 3$ is prime, it must be odd so necessarily $y_1 \neq p/2$. Thus, we get that the principal matrix of $H$ is of the form $T(\alpha, \beta)$ from equation $\eqref{principalmatrix}$.

     The converse is a direct consequence of Theorem $\ref{main}$.
\end{proof}

\begin{remark}\label{maxembdim}
The maximal embedding dimension for a numerical semigroup $H$ in $KW(p,q)$ is precisely $2+\lfloor \frac{p}{2}\rfloor$.  This maximum is achieved when the set of minimal generators of $H$ is a part of an arithmetic sequence $(p, p+d=q, \ldots, q+(p-1)d)$.  However, the building of this extremal semigroup is via the gaps $h_i = q+(p-1)d = pq-p\lfloor \frac{q}{2} \rfloor-q$ and continue.   
For example, with $p=7, q=11$ we see that $h_1 = 77-35-11= 31, h_2 = 27, h_3= 23$ which is contained in $(7,11,15,19, 23, 27,31)$. 
\end{remark}

\section{Auxiliary Results}

Let $H \in KW(p,q)$ be a numerical semigroup of embedding dimension $n\geq 3$.  Thus,
$H$ is minimally generated by the set $\{p,q,h_1,...,h_{n-2}\}$ for some relatively prime positive integers $p$ and $q$, and $h_i = pq-px_i-qy_i$, $1 \le i \le n-2$. Here, $q> x_{n-2}> ... > x_{1}$  and $p> y_{1} > ... > y_{n-2}$ are positive integers with the same parity with $p$ and $q$ respectively.  Let $k$ be a field. Set $S=k[u, v, u_1,...,u_{n-2}]$ to be a polynomial ring and $\phi : S \to k[t]$ to be the map defined by $u \mapsto t^p, v \mapsto t^q, u_{i} \mapsto t^{h_i}, 1 \leq i \leq n-2$. Then the semigroup ring $k[H]=k[t^p,t^q,t^{h_1},...,t^{h_{n-2}}]$ is the image of $\phi$ and it is isomorphic to $S/I_H$ where $I_H = \ker \phi$, called the ideal of $H$, is the prime ideal defining the affine monomial curve whose coordinate ring is $k[H]$.

\subsection{A Classification of $I_H$}
Using results from the appendix in \cite{KUNZ2017397},  we can express the defining ideal of the semigroup ring $k[H]$ where $H \in KW(p,q)$ for any embedding dimension $n \geq 3$ as the sum of determinantal ideals.  

\begin{theorem}
    Let $H = \la p,q, h_1,...,h_{n-2} \ra \in KW(p,q)$, so that $h_i = pq-x_ip-y_iq$, $1 \leq i \leq n-2$ with positive integers $x_1 < x_2 < ... < x_{n-2}$, $y_1 > y_2 > ... > y_{n-2}$, each satisfying \eqref{KWcondition}: $2x_i \leq q, 2y_i \leq p$. Then the defining ideal $I_H$ of the semigroup ring $k[H]$ is the ideal of the $2 \times 2$ minors of the following $2 \times 3$ matrices: 

    $$ 
    A_{ij} = 
    \begin{bmatrix}
        u_i & u^{q-2x_j}v^{p-y_i-y_j} & u_j \\
        u^{x_j-x_i} & u_j & v^{y_i-y_j}
    \end{bmatrix}, \quad \quad     1 \leq i < j \leq n-2
    $$$$
     \quad
    B =
    \begin{bmatrix}
        v^{y_{n-2}} & u^{q-x_1-x_{n-2}} & u_1 \\
        u^{x_1} & u_{n-2} & v^{p-y_1-y_{n-2}}
    \end{bmatrix}
    $$$$
     C=
    \begin{bmatrix}
        u^{q-x_1-x_{n-2}}v^{p-2y_1} & u_1 & u_{n-2} \\
        u_1 & u^{x_{n-2}-x_1} & v^{y_1-y_{n-2}}
    \end{bmatrix} 
$$

\end{theorem}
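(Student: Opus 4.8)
The plan is to write $J$ for the ideal generated by all $2\times2$ minors of $A_{ij}$ ($1\le i<j\le n-2$), $B$, and $C$, and to prove the two inclusions $J\subseteq I_H$ and $I_H\subseteq J$ separately. Throughout I would use the dictionary $h_i=pq-px_i-qy_i$ together with $x_1<\cdots<x_{n-2}$, $y_1>\cdots>y_{n-2}$ and the defining bounds \eqref{KWcondition}, i.e. $2x_i\le q$ and $2y_i\le p$; these guarantee $q-2x_j\ge0$, $p-y_i-y_j\ge0$, $x_j-x_i>0$, $y_i-y_j>0$, and so on, so that every displayed matrix genuinely has entries in $S$.

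\textbf{The inclusion $J\subseteq I_H$.} Since $I_H=\ker\phi$ and each minor is a difference of two monomials, it is enough to check that the two monomials in each minor have the same image under $\phi$, i.e. the same weighted degree when $u,v,u_i$ carry weights $p,q,h_i$. For $A_{ij}$ the three minors are $u_iu_j-u^{q-x_i-x_j}v^{p-y_i-y_j}$, $u_iv^{y_i-y_j}-u_ju^{x_j-x_i}$, and $u^{q-2x_j}v^{p-2y_j}-u_j^2$, whose terms carry the common degrees $2pq-p(x_i+x_j)-q(y_i+y_j)$, $pq-px_i-qy_j$, and $2pq-2px_j-2qy_j$ respectively; the analogous routine computation for $B$ and $C$ (each of whose three minors I would list and check degree by degree) completes this direction.

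\textbf{The inclusion $I_H\subseteq J$.} This is the crux. I would invoke the explicit generating set of $I_H$ furnished by the appendix of \cite{KUNZ2017397} underlying Theorem \ref{KWmain} — which presents $I_H$ by $\binom{n}{2}$ binomials — and match each generator, up to sign, with one of our minors. I would sort them into three families. First, the $n$ critical binomials read off the principal matrix $T$ of Theorem \ref{main}: the $u$-relation $u^{q-x_{n-2}}-v^{y_{n-2}}u_{n-2}$ and the $v$-relation $v^{p-y_1}-u^{x_1}u_1$ are the first two minors of $B$, while the relation $u_i^2-u^{q-2x_i}v^{p-2y_i}$ on each $h_i$ is the third minor of $A_{ij}$ for any $j>i$, and for $i=1$ it is the first minor of $C$. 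Second, the $\binom{n-2}{2}$ product relations $u_iu_j-u^{q-x_i-x_j}v^{p-y_i-y_j}$ are exactly the first minors of the $A_{ij}$. Third, the mixed relations $u_iv^{y_i-y_j}-u_ju^{x_j-x_i}$ are the second minors of the $A_{ij}$, the boundary case $(i,j)=(1,n-2)$ also appearing as the third minor of $C$. Verifying that these three families already contain a generating set gives $I_H\subseteq J$, and with the first part, $J=I_H$.

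\textbf{Bookkeeping and the main obstacle.} I would record the coincidences that both explain why $2+\binom{n-2}{2}$ matrices suffice and show the minors are not a minimal generating set once $n\ge5$: the third minor of $B$, the second minor of $C$, and the first minor of $A_{1,n-2}$ all agree up to sign, as do the third minor of $C$ and the second minor of $A_{1,n-2}$. The genuine difficulty is the matching step, because the appendix of \cite{KUNZ2017397} records its relations in the lattice-path / Ap\'ery-set language rather than in the variables $u,v,u_i$; the real work is translating those generators, reconciling exponents exactly, and verifying that the boundary relations attached to $h_1$ and $h_{n-2}$ — the ones packaged into $B$ and $C$ instead of into any $A_{ij}$ — are all accounted for with nothing omitted. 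As a self-contained alternative, one could instead show the surjection $S/J\twoheadrightarrow k[H]$ is injective by a standard-monomial argument: the critical binomials reduce any monomial to the form $u^av^bu_i^{\varepsilon}$ with $\varepsilon\in\{0,1\}$, and the minors of $B$ bound $a,b$, yielding $\dim_k(S/J)_d\le\dim_k k[H]_d$ in every $t$-degree $d$; this avoids the citation but needs an initial-ideal verification of comparable difficulty, so I expect completeness of the generating set to be the main hurdle either way.
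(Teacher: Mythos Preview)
Your proposal is correct and follows essentially the same route as the paper: cite the explicit generating set from the appendix of \cite{KUNZ2017397} and match each generator to a $2\times2$ minor of one of the displayed matrices. Two small points. First, you overestimate the obstacle: the Kunz--Waldi appendix already presents the $\binom{n}{2}$ generators directly in the variables $u,v,u_i$ --- namely $f_{ij}=u_iu_j-u^{q-x_i-x_j}v^{p-y_i-y_j}$ for $1\le i\le j\le n-2$, the $n-3$ consecutive relations $g_i=v^{y_i-y_{i+1}}u_i-u^{x_{i+1}-x_i}u_{i+1}$, and the two boundary relations $v^{p-y_1}-u^{x_1}u_1$ and $v^{y_{n-2}}u_{n-2}-u^{q-x_{n-2}}$ --- so no lattice-path/Ap\'ery translation is needed and the matching is a one-line bookkeeping exercise; your proposed alternative standard-monomial argument is unnecessary. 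Second, there is an indexing slip: the minor of $A_{ij}$ from columns $2$ and $3$ equals $u^{q-2x_j}v^{p-2y_j}-u_j^2=-f_{jj}$, so it encodes the square relation on $h_j$, not on $h_i$; this is exactly why $f_{11}$ must be extracted from $C$, as you noted, while $f_{jj}$ for $j\ge2$ comes from any $A_{ij}$ with $i<j$.
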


\begin{proof}
    According to the theorem in Appendix A. of \cite{KUNZ2017397}, the defining ideal of $H$ consists of the following binomials:
    \begin{align*}
        f_{ij} &= u_iu_j-u^{q-x_i-x_j}y^{p-y_i-y_j}, \quad 1 \leq i \leq j \leq n-2 \\
        g_i &= v^{y_i-y_{i+1}}u_i-u^{x_{i+1}-x_i}u_{i+1}, \quad 1 \leq i \leq n-3 \\
        h_1 &= v^{p-y_1}-u^{x_1}u_1 \\
        h_2 &= v^{y_{n-2}}u_{n-2}-u^{q-x_{n-2}}
    \end{align*}
    Let $M^l$ denote column $l$ of a matrix $M$. Observe that
    \begin{align*}
        f_{ij} &= \det
        \begin{bmatrix}
            A_{ij}^1 & A_{ij}^2
        \end{bmatrix}, \quad 1 \leq i < j \leq n-2, \\
        f_{jj} &= \det 
        \begin{bmatrix}
            A_{ij}^2 & A_{ij}^3
        \end{bmatrix}, \quad 2 \leq j \leq n-2, \\
        f_{11} &= \det 
        \begin{bmatrix}
           C^1 & C^2 
        \end{bmatrix}, \\
        g_i &= \det
        \begin{bmatrix}
            A_{ij}^1 & A_{ij}^3
        \end{bmatrix}, \quad 1 \leq i \leq n-3, \: \:  j=i+1, \\
        h_1 &= \det 
        \begin{bmatrix}
            B^1 & B^3
        \end{bmatrix}, \\
        h_2 &= \det
        \begin{bmatrix}
            B^1 & B^2
        \end{bmatrix}
    \end{align*}
\end{proof}

\subsection{Minimal Free Resolutions}

\newcommand{\al}{\alpha_1}
\newcommand{\all}{\alpha_2}
\newcommand{\be}{\beta_1}
\newcommand{\bee}{\beta_2}
\newcommand{\A}{\alpha}
\newcommand{\B}{\beta}

In this section, we explicitly describe minimal free resolutions of the semigroup ring $k[H]$ where $H$ is a numerical semigroup of small embedding dimension $3$ or $4$ in the class $KW(p,q)$, and give their Betti numbers. We express the maps in the resolution in terms of the $x_i, y_i$ where $px_i+qy_i$ are the complements of the gaps $h_i$ of $\la p,q \ra$, i.e., $h_i = pq - (px_i + qy_i)$ for $1 \leq i \leq n-2$.  \\

\subsubsection{Embedding Dimension of $H$ is $3$} In this instance, we have $n=3$, so \\ $H = \langle p,q,h_1 \rangle \in KW(p,q)$, $h_1= pq-px_1-qy_1$. Here, $x_1,y_1 \in \N$ with $(x_1,y_1) \leq (q/2,p/2)$. Furthermore, $k[H] = k[t^p,t^q,t^{h_1}]$ is the semigroup ring associated to $H$, and $\phi : S \rightarrow R$ is the map defined by $u \mapsto t^p, v \mapsto t^q, u_1 \mapsto t^{h_1}$. Following is the resolution of $k[H]$: $$0 \to S^2 \xrightarrow{\begin{pmatrix}
    u_1 & -u^{q-2x_1}\\
    -u^{x_1} & v^{y_1}\\
    v^{p-2y_1} & -u_1
\end{pmatrix}} S^{3} \xrightarrow{\begin{pmatrix}
    v^{p-y_1}-u^{x_1} & u^{q-2x_1}v^{p-2y_1} -u_1^2 & u^{q-x_1} -v^{y_1}u_1
\end{pmatrix}} S^1 $$

\subsubsection{Embedding Dimension of $H$ is $4$}
In this setting, $H = \langle p,q,h_1,h_2 \rangle \in KW(p,q)$, $h_i= pq-px_i-qy_i, i=1,2$. Here, $x_1<x_2\leq q/2$, $y_2<y_1 \leq p/2$ are in $\N$. Furthermore, $k[H] = k[t^p,t^q,t^{h_1},t^{h_2}]$ is the semigroup ring associated to $H$. Following is the resolution of $R/I$, where $I$ is the relation ideal of $k[H]$: $$0 \to S^3 \xrightarrow{A_1} S^8 \xrightarrow{A_2} S^6 \xrightarrow{A_3} S^1$$
The maps are given by matrices below. \vspace{.2in}\\
$$A_1 = 
\begin{bmatrix}
u_2 & -u^{q-x_2}v^{y_1-y_2} & 0 \\
-u_1 & u^{q-x_1-x_2} & 0 \\
u^{x_1}v^{p-2y_1} & -x_1 & 0 \\
0 & v^{y_2} & u^{x_2-x_1} \\
-v^{p-y_1-y_2} & u_2 & 0 \\
-u^{2x_1-x_2} & 0 & -v^{y_1-y_2} \\
u^{x_2-x_2} u_1 & 0 & u_2 \\
0 & v^{2y_2-y_1} u_2 & -u_1
\end{bmatrix},$$ 
\vspace{.3in}\\
$A_2 =$\\
\resizebox{\linewidth}{!}{%
$\begin{bmatrix}
-u^{x_1} & -u^{2x_1-x_2} v^{p-y_1-y_2} & 0 & 0 & 0 & -u_2 & -v^{p-y_1-y_2} & 0 \\
0 & 0 & 0 & -u_2 & 0 & u_1 & u^{x_2-x_1} & -v^{\frac{y_1-y_2}{2}} \\
0 & 0 & v^{y_2} & u_1 & u^{x_2-x_1}v^{2y_2-y_1} & 0 & 0 & u^{x_2-x_1} \\
-v^{p+y_2-2y_1} & -u^{2x_1-x_2} u_1 & u^{q+x_1-2x_2} & -u^{q-2x_2} v^{p-y_1-y_2} & -v^{2y_2-y_1}u_2 & -u^{q-x_1-x_2}v^{p-2y_1} & -u_1 & -u_2 \\
-u^{x_2-x_1} v^{p-2y_1} & -v^{p-y_1-y_2} & u_2 & 0 & u_1 & 0 & 0 & 0 \\
-u_1 & -u_2 & u^{q-2x_2} v^{y_1-y_2} & 0 & u^{q-x_1-x_2} & 0 & 0 & 0 
\end{bmatrix}
$}
\vspace{.3in}\\
$A_3=$ \\
\resizebox{\linewidth}{!}{%
$ 
\begin{bmatrix}
u^{q-2x_1}v^{p-2y_1} - u_1^2, & u^{q-x_1-x_2}v^{p-y_1-y_2} - u_1u_2, & u^{q-2x_2}v^{p-2y_2} - u_2^2, & v^{y_1-y_2}u_1 - u^{x_2-x_1}u_2, & v^{y_2}u_2 - u^{q-x_2}, & u^{x_1}u_1 - v^{p-y_1}
\end{bmatrix}
$}

\vspace{.2in}

\section{Generalizing KW Type Monoids to Dimension Three}

In this section, we will expand this class $KW(p,q)$. Let $p$ and $q$ be two coprime numbers with $1 < p < q$. Let 
 $w=r_1p+r_2q$ for some  $r_1, r_2\in \N$. Then the semigroup $S= \la sp,sq,r_1p+r_2q \ra$ with $\gcd(s,w)=1$, is again symmetric and with Frobenius number $s(pq-p-q)+w(s-1)$.  Consider  the class $R(p,q,r_1,r_2,s)$ of numerical semigroups obtained by adding to $S$ gaps corresponding to points $(a,b,c) \in \N^3$ falling on a lattice path under the plane $G_0: spx+sqy+ wz = s(pq+w)-sp-sq-w$. 

From here on, $S$ is the numerical semigroup $\la sp, sq, w\ra$ where $p<q$ are relatively prime and $s,w$ are relatively prime and $w =r_1p+r_2q, r_i\ge 1$. 
\begin{figure}[h]
\caption{Gaps of $\langle 15, 21, 17 \rangle$ are in 1-1 correspondence with lattice points under the plane $15x+21y+17z=103$.}
\includegraphics[scale=0.85]{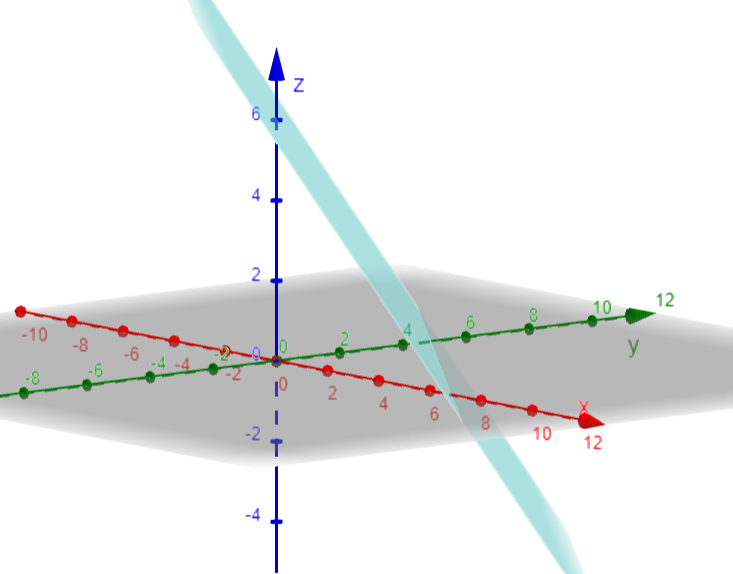}
\end{figure}

\begin{proposition}\label{!GapExp}
 Every gap of $S$ can be written as $s(pq+w) - sp(a+1)-sp(b+1) -w(c+1)$ for some $(a,b,c) \in \N^3$. We may always take $c\le s-1$. In particular, each gap of $S$ is of the form $s(pq+w) - sp(a+1)-sp(b+1) -w(c+1)$  with $(a,b,c)<(q,p,s)$ alphabetically, and corresponds to a unique lattice point $(a,b,c)$ below the hyperplane $$G_0: spx+sqy+ wz = s(pq+w)-sp-sq-w.$$
\end{proposition}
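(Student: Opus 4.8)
The plan is to reduce the statement to two ingredients: the symmetry of $S$ (assumed, as asserted when $S$ was introduced) together with its Frobenius number $F = s(pq+w)-sp-sq-w$, and a canonical--form argument for representing elements of $S$. First I would record that, by symmetry, an integer $g$ is a gap of $S$ if and only if $F-g \in S$. Hence for every gap $g$ we may write $F-g = a\,(sp) + b\,(sq) + c\,w$ with $(a,b,c)\in\N^3$, and rearranging gives
$$ g = s(pq+w) - sp(a+1) - sq(b+1) - w(c+1), $$
which is exactly the asserted form (correcting the evident misprint $sp(b+1)\to sq(b+1)$, forced by the hyperplane $spx+sqy+wz$). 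This settles the first sentence of the Proposition.

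Next I would normalize $c$. The defining relation $w=r_1p+r_2q$ multiplied by $s$ gives $s\,w = r_1(sp)+r_2(sq)$, so whenever $c\ge s$ one may replace $s$ copies of $w$ by $r_1$ copies of $sp$ and $r_2$ copies of $sq$ without changing $F-g$, strictly decreasing $c$ while keeping all coordinates in $\N$. Iterating yields a representation with $0\le c\le s-1$, proving ``we may always take $c\le s-1$.'' Because $sp\equiv sq\equiv 0 \pmod s$ and $\gcd(s,w)=1$, reducing $F-g$ modulo $s$ shows $c$ is the unique residue in $\{0,\dots,s-1\}$ with $c\,w\equiv F-g \pmod s$; thus $c$ is uniquely determined by $g$.

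With $c$ fixed, $F-g-cw = s(ap+bq)$, so $m':=(F-g-cw)/s$ lies in $\la p,q\ra$. Using $q\,(sp)=p\,(sq)$ I would reduce $a$ into the range $0\le a\le q-1$: replacing $(a,b)$ by $(a-q,b+p)$ preserves $m'$, and the resulting $a$ is the unique residue in $\{0,\dots,q-1\}$ with $pa\equiv m'\pmod q$ (here $\gcd(p,q)=1$). The standard Ap\'ery argument for $\la p,q\ra$ then forces $b=(m'-ap)/q\ge 0$ precisely because $m'\in\la p,q\ra$, and $b$ is determined by $a$ and $c$. These residue computations also give uniqueness of the whole triple: if two canonical triples represent the same $F-g$, comparing modulo $s$ forces equal $c$, then modulo $q$ forces equal $a$, then equal $b$. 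Since $a\le q-1$, the first coordinate already gives $(a,b,c) < (q,p,s)$ alphabetically.

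Finally, because $g\ge 1$ we have $sp\,a+sq\,b+w\,c = F-g < F$, so the triple $(a,b,c)$ is a lattice point lying strictly below $G_0$; combined with the uniqueness above, each gap corresponds to exactly one such point. The main obstacle is the bookkeeping in the canonical--form step: one must fix the order of reductions (first $c$ via $\gcd(s,w)=1$, then $a$ via $\gcd(p,q)=1$) and verify that the residue--selected $a$ does not overshoot, i.e.\ that $b\ge 0$, which is where membership $m'\in\la p,q\ra$ (a consequence of $F-g\in S$, hence of symmetry) is essential. Everything else is routine modular arithmetic.
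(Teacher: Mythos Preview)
Your proof is correct and follows the same strategy as the paper: use the symmetry of $S$ to write $F(S)-g\in S$ for existence of a representation, then use $\gcd(s,w)=1$ and $\gcd(p,q)=1$ together with the range bounds to force uniqueness of the canonical triple. Your version is in fact more complete than the paper's: the paper only proves uniqueness \emph{assuming} $x<q$, $y<p$, $z<s$, without ever showing such a representative exists, whereas you supply the reduction steps (via $sw=r_1(sp)+r_2(sq)$ and $q(sp)=p(sq)$), verify that $b\ge 0$ survives the normalization, and explicitly check the ``below $G_0$'' assertion.
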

\begin{proof}
   Let $t$ be a gap of $S$. Since $S$ is symmetric, $F(S)-t\in S$ so that $F(S) -t = spx+sqy+wz$ for some $(x,y,z) \in \N^3$. In other words, 

   \begin{equation}
       t= s(pq+w) -sp(x+1)-sq(y+1)-w(z+1)
   \end{equation}

Now assume $x<q, y< p, z<s$.   Suppose 
 \begin{multline*}
     t= s(pq+w) -sp(x+1)-sq(y+1)-w(z+1) \\
     = s(pq+w) -sp(x'+1)-sq(y'+1)-w(z'+1)
 \end{multline*}
 for some $(x,y,z),(x',y',z') \in \N^3$. Then $$s(px+qy-px'-qy') = w(z'-z)$$
Since $(s,w)=1$, $s \mid z'-z$, i.e.,   $z'=z+ms$ for some $m \in \Z$. But $0 \leq z,z' \leq s-1$ so $m=0$. 

Therefore,  $$p(x-x')=q(y'-y).$$  Since $p$ and $q$ are relatively prime, we get  $x=x'+mq$ for some $m \in \Z$. But $0 \leq x,x' \leq q-1$ so $m=0$. 

Thus, we must have $x=x', y=y'$, and $z=z'$.

\end{proof}

\begin{definition}
We denote by $KW(p,q,r_1,r_2,s)$ the numerical semigroups $H$ in $R(p,q,r_1,r_2,s)$   generated by $\{ sp,sq, w= r_1p+r_2q, h_1, \ldots, h_{n-3}\}$ where $h_i = s(pq+w)-(x_i+1)sp-(y_i+1)sq- (z_i+1)w$, where $x_i\le {\frac {q-4}{2}}, y_i\le {\frac {p-4}{2}}$ and $z_i \le s-2$. 
\end{definition}

\begin{figure}[h]
\caption{$H$ obtained by adding $55,51,63$ to $S$}
\includegraphics[scale=0.6]{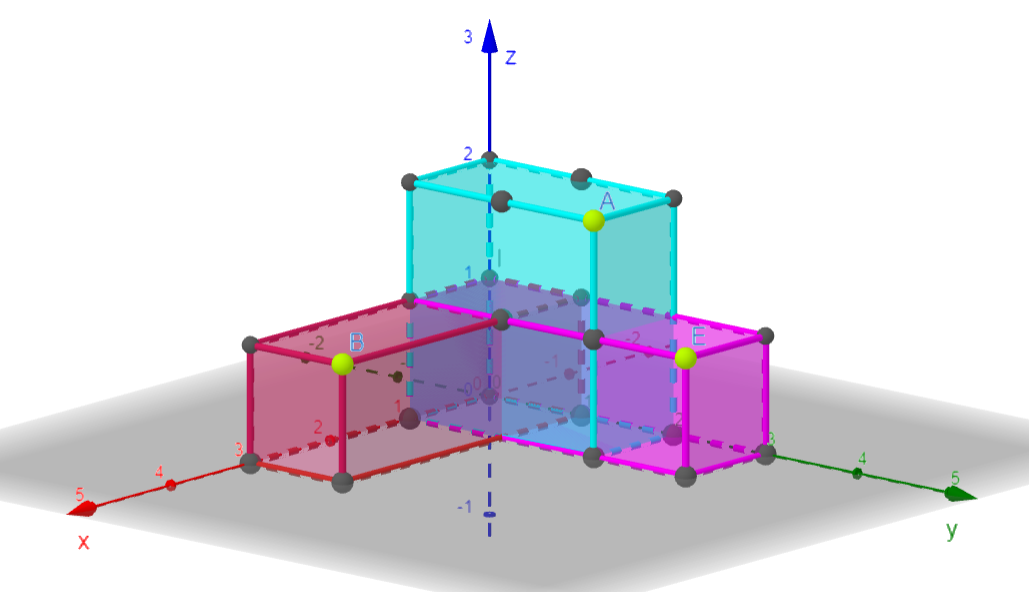}
\end{figure}

Set $\Gamma(x,y,z) = F(S)-xsp-ysq-zw$ for $(x,y,z) \in \N^3$.The Ap\'ery set of a numerical semigroup $\mathbf a$ with respect to an element $a \in \mathbf a$, named in honour of R. Ap\'ery, is $Ap(\mathbf a, a) = \{b \in \mathbf a \mid b-a \not\in \mathbf a\}$. Let $H= \la sp,sq,w,h_2,...,h_{n-2} \ra \in KW(p,q,r_1,r_2,s)$. We give a characterization of the elements $h_2,...,h_{n-2}$ in terms os the Ap\'ery sets of $H$.  To understand the motivation, we go through the following geometric illustration.

    \begin{example}

    Let $S=\la sp=36,sq=44,w=29 \ra$. Obtain $H$ by adding the gaps corresponding to a lattice point within one of the cuboids in Figure 2. Here, $H= \la 36,44,29,206,221,222  \ra \in KW(9,11,2,1,4)$, where $h_2=221,h_3=222,h_4=206$ correspond to the lattice points $A=(1,2,2), B=(3,1,1)$ and $E=(1,3,1)$, respectively. 

    First, we compute $\Ap(H,sp=36)$. Fix $z=c$, with $c \leq z_1$, the maximum height of any cuboid in the image. This plane passes through at least one of the three cuboids, say $C_i$ which is obtained from $h_i$. For each $b \leq b_i$, there is a line $y=b,z=c$ on the plane $z=c$ such that each point $(a_j,b,c), a_j \leq a_2$ is in $H$. Moreover, whenever $i,j \leq 2$, we have $$\G(a_i,b,c) \equiv \G(a_j,b,c)\mod sp$$ 
    
    Out of all these points on the line $z=c,y=b$, the one with the largest $x-$coordinate, say $P_l$, corresponds to the smallest gap congruent to $sw-sqb-wc$ modulo $sp$. The equivalence classes are distinct for each distinct $b$ and $c$, so $\G(P_l) \in \Ap(H, sp)$. Visually, $Ap(H,sp)\bs S$ is contained in the set of gaps corresponding to the lattice points seen in Figure 3.
\begin{figure}
\centering
\includegraphics[width=0.50\textwidth]{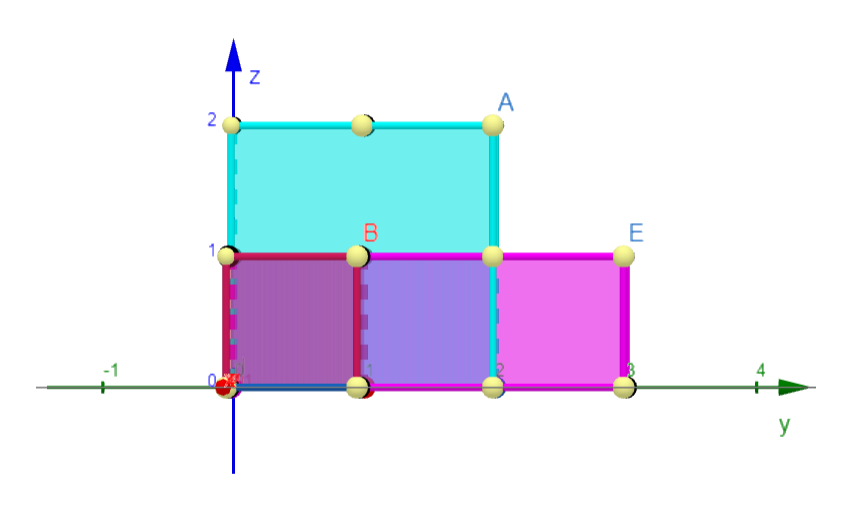}
\caption{$Ap(H,sp)\bs S$}
\end{figure}

    Similarly, we can find elements in $\Ap(H,sq=44)$ and $\Ap(H,w=29)$. $Ap(H,sq)\bs S$ and $Ap(H,w) \bs S$ can be visualized in Figures 4 and 5 respectively. One can see that the only lattice points common to all three figures $3,4$ and $5$ are exactly those corresponding to $h_2,h_3$ and $h_4$.
     \end{example}

\begin{figure}[!htb]
   \begin{minipage}{0.48\textwidth}
     \centering
     \includegraphics[width=.7\linewidth]{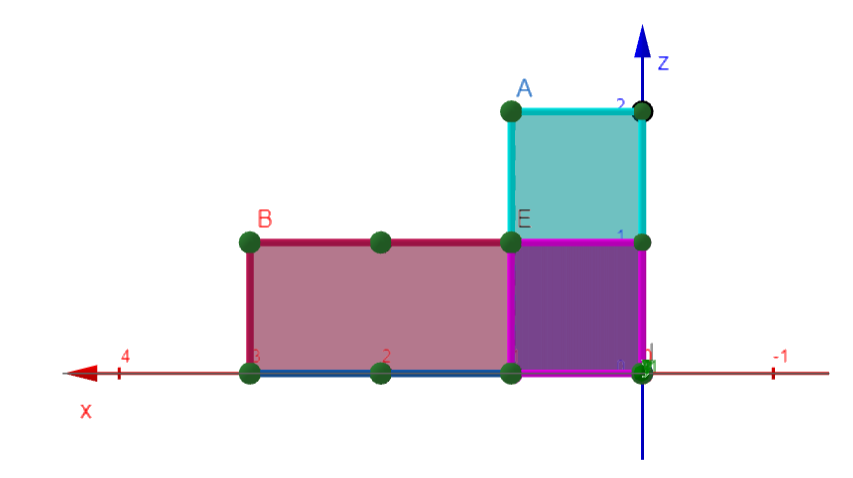}
     \caption{$Ap(H,sq)\bs S$}
   \end{minipage}
   \begin{minipage}{0.48\textwidth}
     \centering
     \includegraphics[width=.7\linewidth]{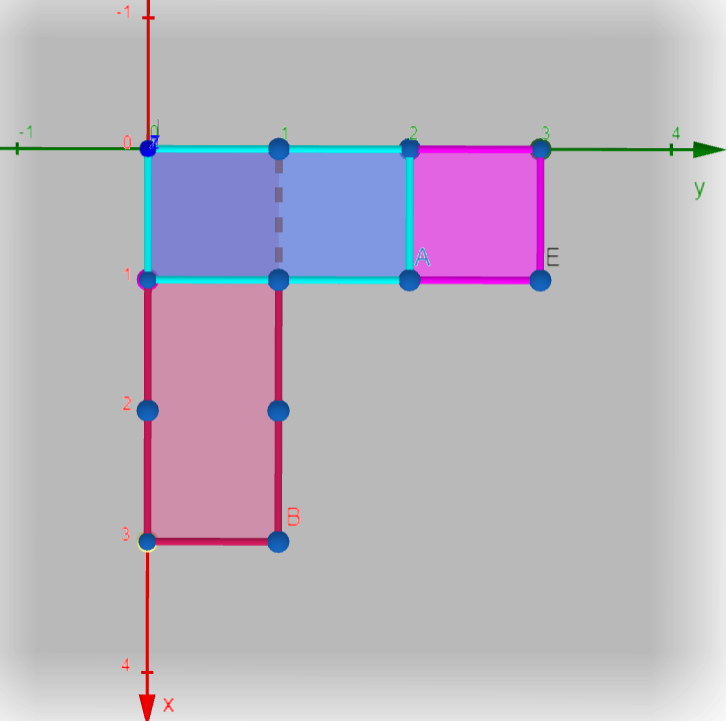}
     \caption{$Ap(H,w)\bs S$}
   \end{minipage}
\end{figure}

  This gives rise to the following result:

\begin{proposition}
    Let $H \in KW(p,q,r_1,r_2,s)$. Then $\{\text{minimal generators of } H\} \bs \{sp,sq,w\} = \Ap(H,sp) \cap \Ap(H,sq) \cap \Ap(H,w)$.
\end{proposition}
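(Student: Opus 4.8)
The plan is to prove the two inclusions separately, with the whole weight of the argument resting on a single ``doubling'' identity. Throughout I write $S=\la sp,sq,w\ra$ and use that each extra generator has the form $h_i=s(pq+w)-(x_i+1)sp-(y_i+1)sq-(z_i+1)w$ with $x_i\le\frac{q-4}{2}$, $y_i\le\frac{p-4}{2}$, $z_i\le s-2$. One bookkeeping remark is needed at the outset: since $0-g\notin H$ for every $g$, the element $0$ lies in every Apéry set, so the asserted equality is to be read for the nonzero elements (the usual convention that excludes $0$).

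For the inclusion $\subseteq$ I would argue directly. If $h$ is a minimal generator with $h\notin\{sp,sq,w\}$ and $g\in\{sp,sq,w\}$, then $h-g\in H$ would give $h=g+(h-g)$ with $h-g$ a nonzero element of $H$, contradicting the minimality of $h$; hence $h-g\notin H$ for each such $g$, so $h$ lies in all three Apéry sets.

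The crux is the reverse inclusion, and for it the key step is to establish the identity, valid for all $i,j$ (including $i=j$),
$$h_i+h_j=sp\,(q-x_i-x_j-2)+sq\,(p-y_i-y_j-2)+w\,(2s-z_i-z_j-2),$$
obtained by writing $2s(pq+w)=q\cdot sp+p\cdot sq+2s\cdot w$ and collecting terms. The defining bounds on $x_i,y_i,z_i$ make each of the three coefficients nonnegative (in fact $\ge 2$), so $h_i+h_j\in S$ for all $i,j$; this is precisely the role played in the planar case by $2x_i\le q$ and $2y_i\le p$. Granting this, I would take $b\neq 0$ in the triple intersection and use the elementary dictionary ``$b-g\in H$ iff some expression of $b$ in the minimal generators uses $g$,'' so that $b\in\Ap(H,g)$ iff no expression of $b$ uses $g$. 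As $b\in H$ has at least one expression and the three Apéry conditions forbid $sp,sq,w$ from occurring, that expression must be purely in the $h_i$: $b=\sum_i\mu_i h_i$ with $\mu_i\ge 0$ not all zero. If $\sum_i\mu_i\ge 2$, I would pick two (possibly equal) indices occurring in the sum and replace the corresponding $h_i+h_j$ by its $S$-expression above; since $h_i+h_j>0$, at least one of $sp,sq,w$ would then appear, contradicting the Apéry membership of $b$. Hence $\sum_i\mu_i=1$, i.e. $b=h_{i_0}$ is an extra minimal generator, and in particular $b\notin\{sp,sq,w\}$.

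The only genuine obstacle I anticipate is the doubling identity $h_i+h_j\in S$, and specifically checking that the inequalities defining $KW(p,q,r_1,r_2,s)$ yield nonnegative coefficients; everything else is the routine Apéry-set/expression dictionary together with the harmless exclusion of $0$. It is also worth confirming that $\{sp,sq,w,h_1,\dots\}$ is genuinely a minimal generating set, so that each $h_{i_0}$ produced at the end really is a minimal generator, but this is part of the standing setup of $KW(p,q,r_1,r_2,s)$.
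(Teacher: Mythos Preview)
Your proof is correct and follows the same overall architecture as the paper's: the easy inclusion is handled identically, and for the reverse inclusion both arguments reduce an element of the triple intersection to a sum $\sum_i\mu_i h_i$ and then show this sum cannot involve two or more $h$'s because any $h_i+h_j$ already lies in $S=\la sp,sq,w\ra$.

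The execution differs slightly and yours is cleaner. The paper treats the diagonal and off-diagonal cases separately: first it invokes $2h_i\in S$ to force each $t_i\le 1$, and then for two distinct indices it rewrites $h_1+h_2=2h_1-sp(x_2-x_1)-sq(y_2-y_1)-w(z_2-z_1)$, observes that minimality forces one of the differences to be negative, and concludes that one of $h-sp,h-sq,h-w$ lies in $H$. That last step is a little elliptical, since one must still know that the remaining two coefficients stay nonnegative; this follows, but only once one uses the specific expression $2h_1=(q-2x_1-2)sp+(p-2y_1-2)sq+(2s-2z_1-2)w$. Your single identity
\[
h_i+h_j=(q-x_i-x_j-2)\,sp+(p-y_i-y_j-2)\,sq+(2s-z_i-z_j-2)\,w
\]
handles both cases at once and makes transparent that all three coefficients are $\ge 2$, so the Ap\'ery contradiction is immediate. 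Your remark about excluding $0$ from the triple intersection is also a genuine (if minor) point that the paper passes over.
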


\begin{proof}

    Denote by $\mathbf h$ the minimal generating set of $H$. Let $h_i\in \mathbf h$.  If $h_i-sp \in H$, then $h_i = \sum_{j}r_jh_j + a sp+bsq+cw $.  If $r_i =0$, $h_i$ is not a minimal generator. If $r_i \ge 1$, then we have $\sum_{j\neq i}r_jh_j +(r_i-1)h_i (a+1)sp+bsq+cw =0$ which is impossible.  Hence $h_i-sp\notin H$ and hence $h_i \in \Ap(H,sp)$.  Similarly, $h_i \in \Ap(H, sq)$ and $h_i \in \Ap(H,w)$. Thus, $\mathbf h \bs \{p,sq,w\} \subset \Ap(H,sp) \cap \Ap(H,sq) \cap \Ap(H,w)$ since $x \notin \Ap(H,x)$ for any $x \in H$.

    If $h \in \Ap(H,sp) \cap \Ap(H,sq) \cap \Ap(H,w)$ then $h = \sum_{i=1}^{n-2} t_i h_i$ for $h-sp, h-sq, h-w $ cannot be in $H$.  Further since $2h_i \in \la sp,sq,w \ra$ for all $i$, $t_i \le 1$.  
If two of the $t_i$ are not zero, say, $i = 1,2$, then we have $h = 2h_1-sp(x_2-x_1) -sq(y_2-y_1)- w(z_2-z_1) + \sum_{i =3}^{n-2} h_i$. 
By the fact that $h_i \in H$ are minimal generators, we have that one of $(x_2-x_1), (y_2-y_1), (z_2-z_1)$ must be negative. So, one of $h-sp,h-sq,h-w$ must be in $H$ which is impossible. so, only one of $t_i\neq 0$.  Hence, $h= h_i $ for some $i$ and so we get the other inclusion: $\mathbf h \bs \{sp,sq,w\} = \Ap(H,sp) \cap \Ap(H,sq) \cap \Ap(H,w)$. 
\end{proof}

The proposition above establishes a close relationship between the lattice path of a numerical semigroup $H \in KW(p,q,r_1,r_2,s)$ and its Ap\'ery set. In fact, 
 the lattice path of $H$ can be exclusively determined by its Ap\'ery sets $\Ap(H,sp)$, $\Ap(H,sq)$, and $\Ap(H,w)$. Consequently, this observation could facilitate deriving an upper bound for the embedding dimension of any $H \in KW(p,q,r_1,r_2,s)$, as in Remark \ref{maxembdim}.

Kunz and Waldi show that for  $H \in KW(p,q)$, the type of the semigroup ring $k[H]$ equals $e(H)-1$. 
Contrasting this with the dimension three, consider $H\in R(5,7,2,1,3)$ obtained by adding a gap $h$ of $\la 15,21,17\ra$. In the following table. the first column consists of gaps $h$ such that $H= \la 15,21,17,h \ra$, with embedding dimension $4$ has the corresponding type in column $2$.  
\vspace{.1in}\\
\begin{center}
    \begin{tabular}{|c|c|}
    \hline
   Choice of $h$  & $t(H)$\\
   \hline
    50,56,65,67, 71,73,82,86,88,103 & 3  \\
    35, 52 & 4 \\
    \hline
\end{tabular}
\end{center}
\vspace{.3in}

However, we show that for a class of the  $h\in H$ in $KW(p,q,r_1,r_2,s)$, we prove that the type is always $3$. 

\begin{example}
    Consider Figure $6$ below. Take $H=\la 36,44,29,221 \ra \in KW(9,11,2,1,4)$ where $221$ corresponds to the lattice point $A=(1,2,2)$. Then $PF(H)=\{271,316,331\}$ where $331,271,$ and $316$ correspond to the points $I=(2,0,0), J=(0,3,0),$ and $K=(0,0,3)$, respectively. 
\end{example}

\begin{figure}[h]
\centering
\includegraphics[width=0.5\textwidth]{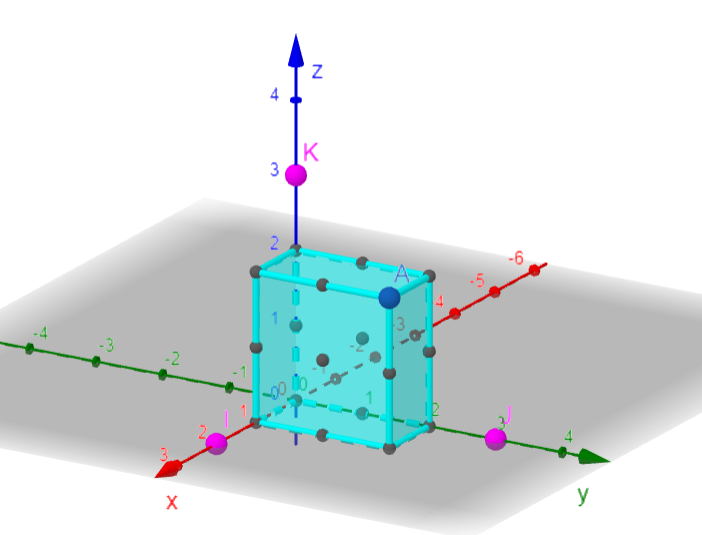}
\caption{$PF(\la 36,44,29,221 \ra)$}
\end{figure}

\begin{theorem}
    Let $H= \la sp,sq,w,h \ra \in  KW(p,q,r_1,r_2,s)$ with $h=F(S)-spx-sqy-wz$.  If $p_z:=F(S)-(z+1)w \not\in H$, then $t(H)=3$.
\end{theorem}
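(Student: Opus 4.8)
The plan is to compute $t(H)$ through its pseudo-Frobenius set, using $t(H)=|PF(H)|$, where $PF(H)$ consists of the gaps of $H$ that are maximal for the order $a\le_H b\Leftrightarrow b-a\in H$; equivalently $f\in PF(H)$ iff $f\notin H$ but $f+\eta\in H$ for each generator $\eta\in\{sp,sq,w,h\}$. Since $2h\in S$ (as noted above), we have $H=S\cup(h+S)$, so a gap of $H$ is exactly a gap of $S$ lying outside $h+S$. Because $S$ is symmetric with Frobenius number $F:=F(S)$, every gap $g$ of $H$ satisfies $F-g\in S$ and hence has the form $g=\G(a,b,c)$ for some $(a,b,c)\in\N^3$; moreover $g\in H\Leftrightarrow g-h\in S$, and since $g-h=(x-a)sp+(y-b)sq+(z-c)w$, the point $g$ fails to be a gap whenever $(a,b,c)\le(x,y,z)$ coordinatewise. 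Thus every gap $g=\G(a,b,c)$ of $H$ has some coordinate strictly exceeding the corresponding coordinate of $(x,y,z)$.

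I then single out the three candidates $P_1=\G(x{+}1,0,0)$, $P_2=\G(0,y{+}1,0)$, and $P_3=\G(0,0,z{+}1)=p_z$. The key observation for the upper bound is that every gap lies below one of these: if $g=\G(a,b,c)$ has $a\ge x+1$ then $P_1-g=(a-x-1)sp+b\,sq+c\,w\in S\subseteq H$, so $g\le_H P_1$, and similarly $g\le_H P_2$ when $b\ge y+1$ and $g\le_H P_3$ when $c\ge z+1$. Consequently, \emph{provided all three $P_i$ are themselves gaps}, any maximal gap $g$ satisfies $g\le_H P_i$ for some $i$ and hence $g=P_i$ (otherwise $g+(P_i-g)=P_i\in H$ would contradict $P_i$ being a gap). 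This yields $PF(H)\subseteq\{P_1,P_2,P_3\}$ and $t(H)\le 3$. The hypothesis enters exactly here: the bound requires all three $P_i$ to be gaps, and the $R(5,7,2,1,3)$ examples of type $4$ are precisely those in which $p_z=P_3\in H$.

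Next I would show $P_1$ and $P_2$ are always gaps under the standing constraints. By symmetry of $S$, $P_1$ is a gap iff $\G(0,y,z)+sp\in S$, and writing $\G(0,y,z)=F-(y\,sq+z\,w)$ this holds iff $y\,sq+z\,w\in\Ap(S,sp)$. Since $S=\la sp,sq,w\ra$ is a complete intersection with relations $q(sp)=p(sq)$ and $s\,w=r_1(sp)+r_2(sq)$, one computes $\Ap(S,sp)=\{b\,sq+c\,w:0\le b\le p-1,\ 0\le c\le s-1\}$, and the inequalities $y<p$, $z<s$ (from $2(y+1)<p$ and $z\le s-2$) place $y\,sq+z\,w$ in this set. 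The same argument with $\Ap(S,sq)=\{a\,sp+c\,w:0\le a\le q-1,\ 0\le c\le s-1\}$ shows $P_2$ is a gap. For $P_3$ the analogous criterion is $x\,sp+y\,sq\in\Ap(S,w)$, but $\Ap(S,w)$ has the genuinely two-dimensional index set forced by $w=r_1p+r_2q$, so this membership is not automatic; this is why $P_3$ being a gap must be assumed, and that assumption is exactly $p_z\notin H$.

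Finally, to promote the upper bound to equality I would show the three $P_i$ are pairwise incomparable under $\le_H$, i.e. that each nonnegative pairwise difference --- one of $(y{+}1)sq-(x{+}1)sp$, $(z{+}1)w-(x{+}1)sp$, $(z{+}1)w-(y{+}1)sq$, together with their negatives --- lies outside $H$. Given incomparability, if $P_i<_H g\le_H P_j$ for a gap $g$ then $P_i<_H P_j$, impossible unless $i=j$ and $g=P_i$; hence each $P_i$ is a maximal gap, so a pseudo-Frobenius number, and the three are distinct. Combined with $PF(H)\subseteq\{P_1,P_2,P_3\}$ this gives $t(H)=3$. The main obstacle is the package of explicit membership facts in $S$ --- the Apéry descriptions used in the gap verifications and the pairwise-incomparability computations --- which I expect to handle uniformly via the symmetry of $S$ and the size constraints $2(x+1)<q$, $2(y+1)<p$, $z\le s-2$; checking that these differences avoid both $S$ and $h+S$ is the delicate point.
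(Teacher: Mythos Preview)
Your plan is correct and yields the same pseudo-Frobenius set $\{P_1,P_2,P_3\}$ as the paper, but the execution differs in two places. First, to show $P_1,P_2\notin H$ you invoke the product description $\Ap(S,sp)=\{b\,sq+c\,w:0\le b\le p-1,\ 0\le c\le s-1\}$ (which is correct and follows from the gluing structure of $S$); the paper instead argues by direct contradiction, writing $P_1=\alpha sp+\beta sq+\gamma w+\delta h$ and using $2h\in S$ and the size constraints to reach an impossibility. Second, and more substantively, to show each $P_i$ is actually maximal you plan to prove pairwise $\le_H$-incomparability, which amounts to checking that three differences lie outside $S\cup(h+S)$ --- the ``delicate point'' you flag. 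The paper avoids these non-membership checks entirely: it verifies directly that $P_i+\eta\in H$ for every generator $\eta\in\{sp,sq,w,h\}$, which is twelve explicit membership computations made routine by the inequalities $q-2x-2\ge 2$, $p-2y-2\ge 2$, $s-z-2\ge 0$ (for instance $P_1+sq=sp(q-x-2)+w(s-1)$). So your structural route via $\le_H$ is valid and conceptually clean, but trades the paper's easy membership verifications for the harder non-membership ones; the paper's direct approach is what lets it dispatch the maximality step without further case analysis.
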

\begin{proof}
    We will show that $$PF(H) = \{p_x:=F(S)-(x+1)sp, p_y:=F(S)-(y+1)sq, p_z\}.$$ 
    \begin{itemize}
    \item[Claim 1.] $p_x, p_y \not\in H$. Suppose first that $p_x \in H$. Then $p_x= \alpha sp + \beta sq + \gamma w + \delta h$ for some $\alpha, \beta, \gamma, \delta \in N$.  By the choice of $h$ to be in  Kunz-Waldi class, $2h \in S$. $pX$ is a gap of $S$.  So, $\delta = 1$.  So, we get $(\alpha+1)sp+(\beta-y)sq+(\gamma-z)w = 0$.  This means either $\beta<y$ or $\gamma<z$.  If indeed $\gamma<z$, then $(z-\gamma)w = (\alpha+1)sp+(\beta-y)sq$.  So, $s$ must divide $z-\gamma$ but this is impossible since $z<s$.  Now if $\gamma\ge z$, $0\le (c-z)w= ((y-\beta)q-(\alpha+1)p)s$.  So, there exists $u \in \N$ such that $(y-\beta)q-(\alpha+1)p = w u= r_1pu+r_2qu \implies  (y-\beta-r_2u)q = (\alpha+1+r_1u)p >0 \implies y>p$ a contradiction. So, $p_x\notin H$.  The argument for $p_y \not\in H$ is similar.

        \item[Claim 2.] $p_i + sp, p_i+sq, p_i+w, p_i+h \in H$ for all $1 \leq i \leq 3$.\\
 To see that $p_1+sp \in H$, we have the following
            \begin{align*}
                p_1 + sp &= F(S) - (x+1) sp + sp \\
                &= F(S) -xsp \\
                &= h +ysq+zw \in H
            \end{align*}
      Next, to see that $p_1+sq \in H$, we have     
\begin{align*}
                p_1 +sq &= F(S)-(x+1)sp + sq \\
                &= spq+sw -sp-sq-w -(x+1)sp +sq\\
                &=sp(q-x-2) +w(s-1) \in H
            \end{align*}
            since $x \leq \lfloor q/2 \rfloor -2 \implies q-x-2 \geq 0$.
 Thirdly, for $p_1+w \in H$, we see
            \begin{align*}
                p_1+w &=  F(S) - (x+1) sp + w \\
                &= spq+sw -sp-sq-w -(x+1)sp +w \\
                &= sp(q-x-2) +s(r_1p+r_2q)-sq \\
                &= sp(q-x-2+r_1)+sq(r_2-1) \in H
            \end{align*}
            since $r_2 \geq 1$.

Similarly, $p_2 + sp = sq(p-y-2)+w(s-1) \in H$ since $y \leq \lfloor p/2 \rfloor -2 \implies p-y-2 \geq 0$, $p_2+sq = h+xsp+zw \in H$, and $p_2 +w = sq(p-y-2+r_2) + sp(r_1-1) \in H$ as $r_1 \geq 1$.

Next, $p_3 + sp = w(s-z-2) +sq(p-1) \in H$ as $z \leq s-2$, $p_3 + sq = w(s-z-2) +sp(q-1) \in H$, and  $p_3+w = h + xsp + ysq \in H$.

It remains to show that $p_i +h \in H$ for all $1 \leq i \leq 3$. Well, recall that  $x \leq \frac{q-4}{2}, y \leq \frac{p-4}{2},$ and $z \leq s-2$ so that
\begin{align*}
    p_1+h &= sp(q-2x-3) +sq(p-y-2)+w(2s-z-2) \in H,
\end{align*}
\begin{align*}
    p_2+h &= sp(q-x-2) +sq(p-2y-3)+w(2s-z-2) \in H,
\end{align*}
and \begin{align*}
    p_3+h &= sp(q-x-2) +sq(p-y-2)+w(2s-2z-2) \in H.
\end{align*}

\item[Claim 3.] No other gap of $H$ is in $PF(H)$. Any gap of $H$ is of the form $g=F(S) -spa-sqb-wc$ for some $(a,b,c) \in \N^3$. Suppose $g$ is different from $p_1,p_2,p_3$.  It is not of the form $F(S)-(x+1+i)sp$ for $i> 0$ because $F(S)-(x+1+i)sp + sp = p_1 - (i-1)sp \in H \implies p_1 \in H$, a contradiction to Claim 1. Similarly, $g$ is not of the form $F(S)-(y+1+i)sq$ or $F(S)-(z+1+i)w$ for $i\geq 0$. Further, one of the following must happen: $a>x, b>y, c>w$. Otherwise, if $a\leq x, b \leq y, c \leq z$, then $ g = h + (x-a)sp + (y-b)sq +(z-c)w \in H$.

Suppose $g=F(S) -spa-sqb-wc \in PF(H)$ with $a=x+j$ for some $j \in \N \bs \{0\}$, and one of $b, c$ is non-zero. Then $p_1 = g+sp(j-1) +sqb+wc \in H.$ If $b=y+j$ for some $j \geq 1$, then $p_2=g+spa +sq(j-1)+wc \in H.$ Finally, if $c=z+j$ for some $j \geq 1$, then $p_3=g+spa +sqb+w(j-1) \in H.$ Each of these cases is a contradiction since $p_1,p_2,p_3 \not\in H$. So $g$ cannot be different from $p_1,p_2$ or $p_3$.

Thus, $PF(H) = \{p_1,p_2,p_3\}$ proving that the type of $H$ is $3$.
    \end{itemize}
\end{proof}

The condition $p_3 \not\in H$ is necessary as seen in the example below.

\begin{example}
    Take $H=\la 27,33,29, 152\ra \in KW(9,11,2,1,3)$. Here, $152=s(pq+w)-(x+1)sp-(y+1)sqy-(z+1)w$ for $(x,y,z)=(3,1,1)$ but $t(H)=4$. In this case, $p_3 =237 \in H$ and $PF(H)=\{p_1,p_2,183,204\}$. 
\end{example}

Further, the theorem still may not hold if $H \in R(p,q,r_1,r_2,s) \bs KW(p,q,r_1,r_2,s)$.
\begin{example}
    Take $H=\la 15,21,17, 37\ra \in R(5,7,2,1,3)$. Here, $37=s(pq+w)-(x+1)sp-(y+1)sqy-(z+1)w$ for $(x,y,z)=(3,1,0)$, so that $3 > \frac{7-4}{2}=1.5$, and $t(H)=4$. 
\end{example}

With this in mind, we ask the following questions:
\begin{question}
What are the conditions on the gaps $h_i$ that will yield an explicit formula for the type of $H$? 
\end{question}
\begin{question}
Let $h_i =spx+sqy+ wz = s(pq+w)-spx_i-sqy_i-wz_i $, where $x_i< {\frac{q}{2}}, y_i< {\frac{p}{2}}, z<s$.  
In particular, what is the type of  $H =( sp,sq,w,h_i,1\le i\le n-2) \in KW(p,q,s,w)$ ? 
\end{question}

Theorem $\ref{KWmain}$ gives the number of generators for the relation ideal of any $H \in KW(p,q)$. Consider the following tables, In the left one, the first column consists of gaps $h$ such that $H= \la 15,21,17,h \ra$, with embedding dimension $4$, and the second column has the corresponding number of minimal generators for its relation ideal. In the next table, the first column consists of $h'$ such that $H'=\la 15,21,17,73,h' \ra$, and the second column is $\mu(I_{H'})$, the number of minimal generators of $I_{H'}$.\\
\begin{tiny}
\begin{center}
\begin{tabular}{|c|c|}
\hline
   Choice of $h$  & $\mu(I_H)$\\
   \hline
   35, 50, 52, 56, 65, 67, 71, 73, 82, 86, 88 & 6 \\
    103 & 7 \\
    \hline
\end{tabular}
\quad \quad \quad 
\begin{tabular}{|c|c|}
\hline 
   Choice of $h'$  & $\mu(I_H)$\\
   \hline
   67 & 9\\
  50,  71, 82, 86 & 10 \\
   65 & 11 \\
    \hline
\end{tabular}
    \end{center}
\end{tiny}
\vspace{.3in}

This prompts the question:

\begin{question}
What is the number of minimal generators for the relation ideal of $H \in KW(p,q,r_1,r_2,s)$?
\end{question}

Even when the embedding dimension of $H$ is $4$, we may have many different numbers of generators for its ideal as seen in the table above.


\begin{thebibliography}{99}

\bibitem{Br1979}{Bresinsky, H., Monomial Gorenstein ideals, Manuscripta Math \bf{429} (1979).  159--181. \tiny{DOI}: \url{https://doi.org/10.1007/BF01303625}.}

\bibitem{MR4428293}{P. Dey, H. Srinivasan, Principal matrices of numerical semigroups, Women in commutative algebra {\em Assoc. Women Math. Ser.} {\bf 29} (2021), 233--256. \tiny{DOI}: \url{https://doi.org/10.1007/978-3-030-91986-3_9}.}


\bibitem{Gimenez_2014}{P. Gimenez, H. Srinivasan, A note on Gorenstein monomial curves, Bulletin of the Brazilian Mathematical Society, New Series {\em Springer Science and Business Media LLC} {\bf 45} (2014), 671--678. \tiny{DOI}: \url{http://dx.doi.org/10.1007/s00574-014-0068-4}.}



\bibitem{article}{E. Kunz, R. Waldi, Geometrical illustration of numerical semigroups and of some of their invariants, Semigroup Forum {\bf 89} (2014), 664--691. \tiny{DOI}: \url{http://dx.doi.org/10.1007/s00233-014-9599-7}.}

\bibitem{KUNZ2017397}{E. Kunz, R. Waldi, On the deviation and the type of certain local Cohen–Macaulay rings and numerical semigroups, Journal of Algebra {\bf 478} (2017), 397--409. \tiny{DOI}: \url{https://doi.org/10.1016/j.jalgebra.2017.01.041}.}
 
 
 \end{thebibliography}
\end{document}